\documentclass{amsart}
\usepackage[margin=1.2in]{geometry}
\usepackage{amsmath,amssymb,amsthm,graphicx,amscd,thmtools}
\usepackage[all,cmtip]{xy}
\usepackage{tikz}

\theoremstyle{plain}
\newtheorem*{main1}{Main Theorem I}
\newtheorem*{main2}{Main Theorem II}
\newtheorem*{thm*}{Theorem}
\newtheorem*{lm*}{Lemma}
\newtheorem{thm}{Theorem}[subsection]
\newtheorem{lm}[thm]{Lemma}
\newtheorem{cor}[thm]{Corollary}
\newtheorem*{cor*}{Corollary}
\newtheorem{prop}[thm]{Proposition}

\newtheorem{que}[thm]{Question}

\theoremstyle{definition}
\newtheorem*{de*}{Definition}
\declaretheorem[sibling=thm,name=Definition,qed={$\diamondsuit$}]{de}
\declaretheorem[sibling=thm,name=Example,qed={$\diamondsuit$}]{ex}
\declaretheorem[sibling=thm,name=Remark,qed={$\diamondsuit$}]{re}
\newtheorem*{ex*}{Example}

\newcommand{\NN}{\mathbb{N}}

\newcommand{\RR}{\mathbb{R}}

\newcommand{\PP}{\mathbb{P}}

\DeclareMathOperator{\im}{im}
\DeclareMathOperator{\cha}{char}
\DeclareMathOperator{\id}{id}

\renewcommand{\phi}{\varphi}
\DeclareMathOperator{\str}{str}
\DeclareMathOperator{\rk}{rk}

\DeclareMathOperator{\End}{End}
\DeclareMathOperator{\GL}{GL}

\DeclareMathOperator{\Hom}{Hom}
\DeclareMathOperator{\Sh}{Sh}

\DeclareMathOperator{\Diag}{diag}
\newcommand{\Wedge}{\bigwedge\nolimits}

\renewcommand{\Vec}{\mathbf{Vec}}

\begin{document}

\title{Universality of high-strength tensors}

\author{Arthur Bik}
\address{MPI for Mathematics in the Sciences, Leipzig, Germany}
\email{arthur.bik@mis.mpg.de}

\author{Alessandro Danelon}
\address{Eindhoven University of Technology, The Netherlands}
\email{a.danelon@tue.nl}

\author{Jan Draisma}
\address{University of Bern, Switzerland, and Eindhoven University of Technology, The Netherlands}
\email{jan.draisma@math.unibe.ch}

\author{Rob H. Eggermont}
\address{Eindhoven University of Technology, The Netherlands}
\email{r.h.eggermont@tue.nl}

\let\thefootnote\relax
\footnotetext{\hspace*{-14pt}
\begin{minipage}{.05\textwidth}
\includegraphics[width=\textwidth]{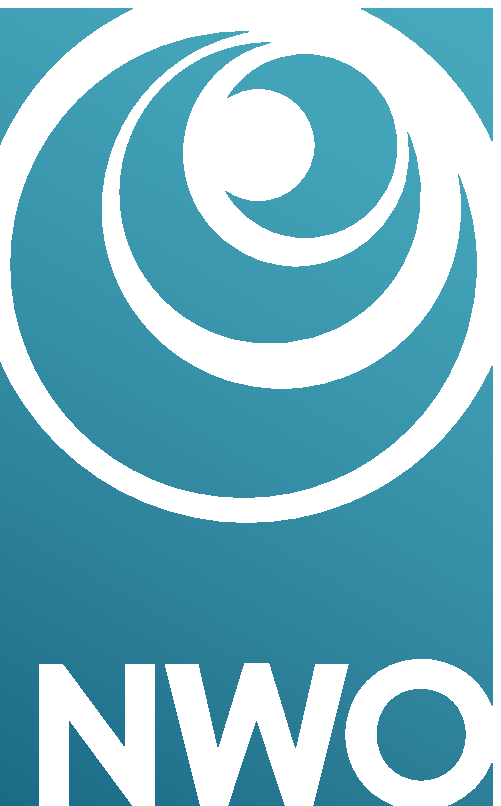}
\end{minipage}
\begin{minipage}{416pt}
AD is supported by JD's Vici grant 639.033.514 from
the Netherlands Organisation for scientific research (NWO).
JD is partially supported by Vici grant 639.033.514
from the NWO and by project grant 200021\textunderscore 191981 from the
Swiss National Science Foundation (SNSF). RE was supported by Veni grant 016.Veni.192.113 from the NWO.
\end{minipage}}

\maketitle


\begin{abstract}
A theorem due to Kazhdan and Ziegler implies that, by substituting linear
forms for its variables, a homogeneous polynomial of sufficiently high
strength specialises to any given polynomial of the same degree in a
bounded number of variables. Using entirely different techniques,
we extend this theorem to arbitrary polynomial functors.  As a
corollary of our work, we show that specialisation induces a quasi-order
on elements in polynomial functors, and that among the 
elements with a dense orbit there are unique smallest and
largest equivalence classes in this quasi-order.
\end{abstract}

\section{Introduction}

Let $K$ be an algebraically closed field of characteristic
$0$. For partitions $\lambda$ of integers $d\geq 1$, denoted
as $\lambda\vdash d$, we consider the corresponding Schur 
functors $S_{\lambda}$. We refer the reader to
\cite{G:polyreps} or \cite[Lecture~6]{FH:first_course} for
an introduction to these objects. For a tuple
$\underline{\lambda}=[\lambda_1,\ldots,\lambda_k]$ of
partitions $\lambda_i\vdash d_i\geq 1$, we denote
$S_{\lambda_1}\oplus\cdots\oplus S_{\lambda_k}$ by
$S_{\underline{\lambda}}$. For finite-dimensional vector
spaces $V,W$ and a linear map $\phi:V \to W$, we get 
a linear map
\[
S_{\underline{\lambda}}(\phi)\colon S_{\underline{\lambda}}(V)\to S_{\underline{\lambda}}(W)
\]
that depends polynomially on $\phi$ and satisfies 
$S_{\underline{\lambda}}(\id_V)=\id_{S_{\underline{\lambda}}(V)}$ and $S_{\underline{\lambda}}(\phi\circ\psi)=S_{\underline{\lambda}}(\phi)\circ S_{\underline{\lambda}}(\psi)$ whenever the former makes sense. In particular, taking
$V=W$ and restricting our attention to invertible $\phi$, we
find that $S_{\underline{\lambda}}(V)$ is
a polynomial representation of the group $\GL(V)$. 

\begin{ex*}
For $\lambda=(d)$, $S_\lambda(V)=S^d V$, the
$d$-th symmetric power of $V$. If $x_1,\ldots,x_n$ is a
basis of $V$, $S_{(d)}(V)$ is the space of homogeneous
polynomials of degree $d$ in $x_1,\ldots,x_n$.
\end{ex*}

For two tuples $\underline{\lambda},\underline{\nu}$ of partitions, we
write $\underline{\nu}\lessdot\underline{\lambda}$ when the number of
occurrences of every partition $\mu\vdash d$ in $\underline{\nu}$ is at
most the number of occurrences of $\mu$ in $\underline{\lambda}$, where
$d$ is the maximal integer for which these numbers differ for some $\mu$.

\begin{ex*}
We have $[(1),(1),(1,1),(3)] \lessdot [(2),(3),(2,1)]$. 
\hfill$\diamondsuit$
\end{ex*}

Let $\underline{\lambda}$ be a tuple of partitions of
positive integers. The following dichotomy is our first main result.

\begin{main1}
Let $\mathcal{P}$ be a property that, for each
finite-dimensional vector space $V$, can be satisfied by some elements of $S_{\underline{\lambda}}(V)$. Assume that $S_{\underline{\lambda}}(\phi)(f)\in S_{\underline{\lambda}}(W)$ satisfies $\mathcal{P}$ for every element $f\in  S_{\underline{\lambda}}(V)$ satisfying $\mathcal{P}$ and every linear map $\phi\colon V\to W$. Then either $\mathcal{P}$ is satisfied by all elements of $S_{\underline{\lambda}}(V)$ for all $V$ or else all elements satisfying $\mathcal{P}$ come from simpler spaces $S_{\underline{\mu}}(V)$ for finitely many tuples $\underline{\mu}\lessdot\underline{\lambda}$.
\end{main1}

We define later what it means to ``come from
$S_{\underline{\mu}}(V)$''; for a more precise formulation of the theorem,
see Theorem~\ref{thm:MainI}. When $\underline{\lambda}$ consists of one
partition, the second case in the theorem says that elements
satisfying $\mathcal{P}$ have bounded strength in the
following sense. 

\begin{de*}
The strength of an element $f\in S_{\lambda}(V)$ with $\lambda \vdash d$ is the minimal integer $k\geq0$ such that there exists an expression
\[
f= \alpha_1(g_1, h_1)+\ldots+ \alpha_k(g_k, h_k)
\]
where $\mu_i \vdash d_i$, $\nu_i \vdash e_i$ with $d_i, e_i
< d$, the $\alpha_i \colon S_{\mu_i}(V)\oplus S_{\nu_i}(V)\to S_{\lambda}(V)$ are $\GL(V)$-equivariant bilinear maps and the $g_i\in S_{\mu_i}(V), h_i\in S_{\nu_i}(V)$ are elements.
\hfill$\diamondsuit$
\end{de*}

In Definition~\ref{de:Strength2} we will give a broader definition
that is equivalent to the one above for tuples consisting of
a single partition. The
definition above and Definition~\ref{de:Strength2} extend the strength
of polynomials and of tuples of polynomials, respectively. Strength of
polynomials plays a key role in the resolution of Stillman's conjecture
by Ananyan-Hochster \cite{AH:stillmanconj} and in recent work by
Kazhdan-Ziegler \cite{KZ:strength1,KZ:strength2}.  Main
Theorem I is an extension (in characteristic zero) of \cite[Theorem
1.9]{KZ:strength2} for homogeneous polynomials, which is the case
where $\underline{\lambda}$ is a single partition with a single row.

\medskip

Next, denote the inverse limit of the spaces $S_{\underline{\lambda}}(K^n)$ mapping to each other via $S_{\underline{\lambda}}$ applied to the projection maps $K^{n+1}\to K^n$ by $S_{\underline{\lambda},\infty}$. This space comes with the action of the direct limit $\GL_{\infty}$ of the groups $\GL_n$ mapping into each other via the maps $g\mapsto\Diag(g,1)$. It also comes with a topology induced by the Zariski topologies on $S_{\underline{\lambda}}(K^n)$, which we again call the Zariski topology.

\begin{cor*}[Corollary~\ref{cor:surjective_infty}]
Suppose that the orbit $\GL_\infty\!\cdot  p$ is Zariski dense in $S_{\underline{\lambda},\infty}$. Then for each integer $n\geq1$, the image of $\GL_\infty\!\cdot p$ in $S_{\underline{\lambda}}(K^n)$ is all of $S_{\underline{\lambda}}(K^n)$.
\end{cor*}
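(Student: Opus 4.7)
The plan is to apply Main Theorem I to the property $\mathcal{P}$ defined on elements $f \in S_{\underline{\lambda}}(V)$ by: $f = S_{\underline{\lambda}}(\phi)(q)$ for some integer $N$, some linear map $\phi \colon K^N \to V$, and some $q \in A_N$, where $A_N \subseteq S_{\underline{\lambda}}(K^N)$ denotes the image of $\GL_\infty \cdot p$ under the $N$-th projection $S_{\underline{\lambda},\infty} \to S_{\underline{\lambda}}(K^N)$. Closure of $\mathcal{P}$ under specialisation is immediate from composition: if $f$ satisfies $\mathcal{P}$ via $(\phi, q)$ and $\psi \colon V \to W$ is linear, then $S_{\underline{\lambda}}(\psi)(f) = S_{\underline{\lambda}}(\psi \circ \phi)(q)$ satisfies $\mathcal{P}$ via $(\psi \circ \phi, q)$.

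Main Theorem I then gives a dichotomy. The ``simpler spaces'' alternative would produce finitely many tuples $\underline{\mu} \lessdot \underline{\lambda}$ such that for every $n$ the set of $\mathcal{P}$-elements of $S_{\underline{\lambda}}(K^n)$ is contained in a proper Zariski-closed $\GL_n$-stable subset; but this set contains $A_n$ (take $N = n$, $\phi = \id_{K^n}$ and $q = f$ for any $f \in A_n$), and $A_n$ is Zariski-dense in $S_{\underline{\lambda}}(K^n)$ by the hypothesis that $\GL_\infty \cdot p$ is dense in $S_{\underline{\lambda},\infty}$. For $n$ sufficiently large this contradicts the subset being proper, so we must be in the other alternative: every element of every $S_{\underline{\lambda}}(V)$ satisfies $\mathcal{P}$.

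Given $f \in S_{\underline{\lambda}}(K^n)$, I first handle the case where $f$ does not factor through any proper subspace of $K^n$. By the previous step we have $f = S_{\underline{\lambda}}(\phi)(q)$ with $q = (h \cdot p)_N$ and $\phi \colon K^N \to K^n$, and the non-degeneracy of $f$ forces $\phi$ to be surjective. After enlarging $N$ so that $N \geq n$ (using compatibility of $p$ to pass from $A_N$ to $A_{N+1}$), I complete $\phi$ to some $\tilde g \in \GL_N$ with $\pi_n \circ \tilde g = \phi$, where $\pi_n \colon K^N \to K^n$ is the coordinate projection; then $f = S_{\underline{\lambda}}(\pi_n)(\tilde g \cdot q) = ((\tilde g h) \cdot p)_n \in A_n$. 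For general $f$, I choose $k$ large and lift $f$ to a sufficiently generic $\tilde f \in S_{\underline{\lambda}}(K^{n+k})$ with $S_{\underline{\lambda}}(\pi)(\tilde f) = f$, where $\pi \colon K^{n+k} \to K^n$ is the standard projection; the previous case applies to $\tilde f$ to give $\tilde f \in A_{n+k}$, and then $f = S_{\underline{\lambda}}(\pi)(\tilde f)$ lies in $S_{\underline{\lambda}}(\pi)(A_{n+k}) = A_n$ by compatibility of the $\GL_\infty$-action with projections.

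The main technical obstacle is producing the generic lift $\tilde f$: the fibre $S_{\underline{\lambda}}(\pi)^{-1}(f)$ is an affine subspace of $S_{\underline{\lambda}}(K^{n+k})$, and one has to check that for $k$ sufficiently large this fibre is not entirely contained in the proper Zariski-closed subvariety consisting of elements that factor through some proper subspace of $K^{n+k}$. This should reduce to a dimension count using the structure of the polynomial functor $S_{\underline{\lambda}}$, after which everything else is $\GL_\infty$-equivariance bookkeeping.
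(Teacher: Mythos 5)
Your overall strategy matches the paper's: take $X(V)=\{P(\phi)P(\pi_N)p : N\geq 1,\ \phi\in\Hom(K^N,V)\}$, observe it is a dense subset of $P$, and invoke Main Theorem~I to conclude $X=P$. You also correctly flag a subtlety that the paper's one-line proof glosses over: $X=P$ only says that every $f\in P(K^n)$ equals $P(\phi)p_N$ for \emph{some} linear map $\phi\colon K^N\to K^n$, whereas to land $f$ in the image of $\GL_\infty\cdot p$ one needs such a $\phi$ that is \emph{surjective}; the paper simply asserts that $X(V)$ equals the orbit image and moves on. Your reduction of the non-degenerate case is correct. The gap is in the dimension count you propose for producing a non-degenerate lift $\tilde f\in P(K^{n+k})$. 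The fibre of $P(\pi)\colon P(K^{n+k})\to P(K^n)$ over $f$ has dimension $\dim P(K^{n+k})-\dim P(K^n)$, while the degenerate locus in $P(K^{n+k})$ has dimension $(n+k-1)+\dim P(K^{n+k-1})$. For $P=S^2$ (or $\Wedge^2$) the difference of these two numbers is $1-\dim P(K^n)$, which is negative for all $n\geq 1$; so the count does not rule out that the fibre lies inside the degenerate locus. (It works for $\deg P\geq 3$, where $\dim P(K^{N})-\dim P(K^{N-1})$ grows superlinearly in $N$, but not in general.)

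A dimension count that does work moves the comparison from $P(K^{n+k})$ to the space of linear maps. Fix $q\in P(K^n)$. Using $X=P$, choose $N_0$ with $q\in\im(\mu_{N_0})$ where $\mu_N\colon\Hom(K^N,K^n)\to P(K^n)$, $\phi\mapsto P(\phi)p_N$; then $\mu_N^{-1}(q)\neq\emptyset$ for every $N\geq N_0$. Since $\Hom(K^N,K^n)$ is irreducible, every component of $\mu_N^{-1}(q)$ has dimension at least $Nn-\dim P(K^n)$. On the other hand, the locus of non-surjective $\phi$ is the determinantal variety of $n\times N$ matrices of rank $<n$, of dimension $Nn-(N-n+1)$. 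Its codimension $N-n+1$ grows with $N$, while the codimension $\leq\dim P(K^n)$ of the fibre is bounded; so once $N>n-1+\dim P(K^n)$ the fibre must contain a surjective $\phi$, which extends to an element $g\in\GL_N$ with $(g\cdot p)_n=q$. This closes the gap uniformly in the degree of $P$ and, I would argue, is the honest content behind the paper's assertion that $X(V)$ coincides with the orbit image.
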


The second goal of this paper is to bring some order in the
(typically uncountable) set of elements with dense
$\GL_{\infty}$-orbits. For elements $p,q\in S_{\underline{\lambda},\infty}$, we write $p\preceq q$ when $q$ specialises to $p$; see \S\ref{ss:linendo}-8 for details.

\begin{ex*}
When $\lambda=(d)\vdash d$, the space
$S_{\lambda,\infty}$ consists of infinite degree-$d$ forms
in variables $x_1,x_2,\ldots$. We have $p\preceq q$ if and
only if $p=q(\ell_1,\ell_2,\ldots)$ where
$\ell_1,\ell_2,\ldots$ are infinite linear forms such that
for all $i \geq 1$, the variable $x_i$ occurs in only finitely many forms 
$\ell_j$; this ensures that $q(\ell_1,\ell_2,\ldots)$ is
a well-defined infinite form of degree $d$.
\hfill$\diamondsuit$
\end{ex*}

Our second main result is the following theorem.

\begin{main2}[Theorem~\ref{thm:MainII}]
Let $\underline{\lambda}$ be a tuple of partitions, all  of
the same integer $d\geq1$. 
There exist elements $p,r\in
S_{\underline{\lambda},\infty}$, each with a dense
$\GL_{\infty}$-orbit, such that $p\preceq q\preceq r$ for all other $q\in S_{\underline{\lambda},\infty}$ with a dense $\GL_{\infty}$-orbit.
\end{main2}

\subsection*{Structure of the paper}

In \S\ref{sec:def&res}, we introduce all relevant
definitions and restate our main results in more precise
terms. Also, while our main results require characteristic
zero, some of our theory is developed in arbitrary
characteristic. In \S\ref{sec:ProofI}, we prove Main Theorem I.  In \S\ref{sec:ProofII}, we prove Main Theorem II by constructing minimal $p$ and maximal $r$. Finally, we end with some examples in \S\ref{sec:examples}.

\subsection*{Acknowledgments}

We thank Andrew Snowden, who first pointed out to us the
action of the monoid~$E$ on $P_\infty$ and asked about
its orbit structure there.

\section{Definitions and main results} \label{sec:def&res}

Fix a field $K$. In our main results we will assume that $K$ is
algebraically closed and of characteristic zero, but for now we make
no such assumption.

\subsection{Strength}

\begin{de} \label{de:Strength}
Let $n\geq 1$ be an integer and let $f \in K[x_1,\ldots,x_n]_d$ be a
homogeneous polynomial of degree $d\geq 2$. Then the {\em strength} of $f$,
denoted $\str(f)$, is the minimal integer $k\geq0$ such that there exists an
expression
\[
f= g_1\cdot h_1+\ldots+ g_k\cdot h_k
\]
where $g_i \in K[x_1,\ldots,x_n]_{d_i}$ and $h_i \in K[x_1,\ldots,x_n]_{d-d_i}$ for some integer $0<d_i<d$ for each $i\in[k]$.
\end{de}

The strength of polynomials plays a key role in the resolution of Stillman's
conjecture by Ananyan-Hochster \cite{AH:stillmanconj,AH:stillmanbounds},
the subsequent work by Erman-Sam-Snowden \cite{ESS:bigpoly,
ESS:strengthhartshorneconj,ESS:bigpoly2} and in Kazhdan-Ziegler's work \cite{KZ:strength1,KZ:strength2}. Also see \cite{BBOV:strengthnotclosed,
BBOV:strengthgen2,BV:strengthline,BDE:boundedstrength,BO:strengthgen1,
DES:noethcubic} for other recent papers studying strength.

\subsection{Polynomial functors and their maps}

Assume that $K$ is infinite.
Let $\Vec$ be the category of finite-dimensional vector spaces over $K$
with $K$-linear maps.

\begin{de}
A {\em polynomial functor of degree $\leq d$} over $K$ is a functor
$P\colon\Vec \to \Vec$ with the property that for all $U,V \in \Vec$ the map
$P\colon\Hom(U,V) \to \Hom(P(U),P(V))$ is a polynomial map of degree
$\leq d$. A {\em polynomial functor} is a polynomial functor of degree
$\leq d$ for some integer $d<\infty$.
\end{de}

\begin{re}
For finite fields $K$, the correct analogue is that of a
{\em strict} polynomial functor \cite{FS:cohomfingroup}.
\end{re}

Any polynomial functor $P$ is a finite direct sum of its {\em homogeneous}
parts $P_d$, which are the polynomial subfunctors defined by
$P_d(V):=\{p \in P(V) \mid \forall t \in K: P(t \id_V)p=t^d p\}$ for each integer
$d\geq0$. A polynomial functor is called homogeneous of degree $d$ when
it equals its degree-$d$ part.

\begin{ex}
The functor $U \mapsto S^d(U)$ is a homogeneous polynomial functor of degree $d$. If $U$ has basis $x_1,\ldots,x_n$, then
$S^d(U)$ is canonically isomorphic to $K[x_1,\ldots,x_n]_d$. In this
incarnation, linear maps $S^d(\phi)$ for $\phi\colon U \to V$ correspond to
substitutions of the variables $x_1,\ldots,x_n$ by linear forms in
variables $y_1,\ldots,y_m$ representing a basis of $V$.
\end{ex}

Polynomial functors are the ambient spaces in current research on
infinite-di\-men\-sional algebraic geometry \cite{B:thesis,BDE:boundedstrength,
BDES:geometrypolyrep,D:topnoeth}. Polynomial functors
form an Abelian category in which a morphism $\alpha\colon P \to Q$ consists
of a linear map $\alpha_U\colon P(U) \to Q(U)$ for each $U \in \Vec$ such that
for all $U,V\in\Vec$ and all $\phi \in \Hom(U,V)$ the following diagram commutes:
\[ \xymatrix{
P(U) \ar[r]^{\alpha_U} \ar[d]_{P(\phi)} & Q(U) \ar[d]^{Q(\phi)} \\
P(V) \ar[r]_{\alpha_V} & Q(V).
} \]
In characteristic zero, each polynomial functor $P$ is isomorphic, in this
Abelian category, to a direct sum of Schur functors, which can be thought
of as subobjects (or quotients) of the polynomial functors $V \mapsto
V^{\otimes d}$. For that reason, we will informally refer to elements
of $P(V)$ as {\em tensors}.

In addition to the linear morphisms between polynomial functors above,
we may also allow each~$\alpha_U$ to be a {\em polynomial} map $P(U)\to Q(U)$ such that
the diagram commutes. Such an $\alpha$ will be called a {\em polynomial
transformation} from $P$ to $Q$. If $U$ is irrelevant or clear from the
context, we write $\alpha$ instead of $\alpha_U$.

\begin{ex} \label{ex:Strength}
In the context of Definition~\ref{de:Strength}, we set $P:=\bigoplus_{i=1}^k
(S^{d_i} \oplus S^{d-d_i})$ and $Q:=S^d$ and~define~$\alpha$ by
\[
\alpha(g_1,h_1,\ldots,g_k,h_k):=g_1\cdot h_1 + \ldots + g_k\cdot h_k.
\]
This is a polynomial transformation $P \to Q$.
\end{ex}

\begin{ex}
Let $Q,R$ be polynomial functors and $\alpha\colon Q\otimes R\to P$ a linear morphism. Then $(q,r)\mapsto \alpha(q\otimes r)$ defines a {\em bilinear} polynomial transformation $Q\oplus R\to P$.
\end{ex}

Inspired by these examples, we propose the following definition of strength
for elements of homogeneous polynomial functors. We are not sure that
this is the best definition in arbitrary characteristic, so we restrict
ourselves to characteristic zero.

\begin{de}\label{de:Strength2}
Assume that $\cha K=0$. Let $P$ be a homogeneous polynomial functor
of degree $d\geq 2$ and let $V \in \Vec$. The {\em strength} of $p \in P(V)$
is the minimal integer $k\geq0$ such that
\[
p=\alpha_1(q_1,r_1)+\ldots+\alpha_k(q_k,r_k)
\]
where, for each $i\in[k]$, $Q_i,R_i$ are irreducible polynomial functors with positive degrees adding up to $d$, $\alpha_i\colon Q_i \oplus R_i \to P$ is a bilinear polynomial transformation and $q_i \in Q_i(V)$ and $r_i \in R_i(V)$ are tensors.
\end{de}

\begin{re}
Positive degrees of two polynomial functors cannot add up to $1$. So nonzero tensors $p\in P(V)$ of homogeneous polynomial functors $P$ of degree $1$ cannot have finite strength. We say that such tensors $p$ have infinite strength. Note that the strength of $0\in P(V)$ always equals $0$.
\end{re}

\begin{prop}
Assume that $\cha K=0$. For each integer $d\geq 2$, the strength of a polynomial
$f \in S^d(V)$ according to Definition~\ref{de:Strength} equals that
according to Definition~\ref{de:Strength2}.
\end{prop}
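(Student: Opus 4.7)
The inequality ``Definition~\ref{de:Strength2} strength $\le$ Definition~\ref{de:Strength} strength'' is immediate: any expression $f = g_1 h_1 + \cdots + g_k h_k$ as in Definition~\ref{de:Strength} becomes a Definition~\ref{de:Strength2} expression by taking $Q_i := S^{d_i}$, $R_i := S^{d-d_i}$ (both irreducible of positive degree summing to $d$), and $\alpha_i(g,h) := g \cdot h$, which is manifestly a bilinear polynomial transformation $S^{d_i} \oplus S^{d-d_i} \to S^d$.

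For the reverse inequality, the key claim is the following classification: \emph{any bilinear polynomial transformation $\alpha \colon Q \oplus R \to S^d$, with $Q, R$ irreducible polynomial functors of positive degrees $a$ and $d-a$, is either zero or, after fixing isomorphisms $Q \cong S^a$ and $R \cong S^{d-a}$, equals a nonzero scalar multiple of the multiplication map.} Granting the claim, each summand $\alpha_i(q_i, r_i)$ in a Definition~\ref{de:Strength2} expression for $f$ is either zero (and can be discarded) or rewrites as a genuine product $g_i \cdot h_i$ with $g_i \in S^{a_i}(V)$ and $h_i \in S^{d-a_i}(V)$, yielding a Definition~\ref{de:Strength} expression of at most the same length.

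To establish the claim, I would observe that a bilinear polynomial transformation $Q \oplus R \to S^d$ corresponds canonically to a linear morphism $Q \otimes R \to S^d$ in the Abelian category of polynomial functors. In characteristic zero we may write $Q = S_\mu$ and $R = S_\nu$ for partitions $\mu \vdash a$ and $\nu \vdash d-a$, and by semisimplicity the dimension of the morphism space $\Hom(S_\mu \otimes S_\nu, S_{(d)})$ equals the Littlewood--Richardson coefficient $c^{(d)}_{\mu, \nu}$. A direct computation (or Pieri's rule) shows $c^{(d)}_{\mu, \nu}$ equals $1$ when $\mu = (a)$ and $\nu = (d-a)$, and vanishes otherwise: indeed $\mu$ must fit inside the single row $(d)$ (so $\mu = (a)$), and the unique semistandard filling of the remaining row $(d)/(a)$ of size $d-a$ that is an LR tableau has all entries equal to $1$, forcing $\nu = (d-a)$. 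The unique (up to scalar) morphism $S^a \otimes S^{d-a} \to S^d$ is ordinary multiplication, which establishes the claim. The main obstacle is this multiplicity computation, but Pieri's rule makes it routine; the essential input is that characteristic zero together with the irreducibility of $Q$ and $R$ pins down $\alpha$ up to a scalar.
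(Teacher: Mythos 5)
Your proof is correct and takes essentially the same approach as the paper's: the easy direction follows from observing that multiplication is a bilinear polynomial transformation, and the hard direction reduces a bilinear transformation $Q \oplus R \to S^d$ to a linear morphism $Q \otimes R \to S^d$ and invokes the Littlewood--Richardson (Pieri) rule to force $Q = S^a$, $R = S^{d-a}$ with the unique morphism being multiplication.
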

\begin{proof}
The inequality $\geq$ follows from the fact that $\alpha_i\colon S^{d_i}
\oplus S^{d-d_i} \to S^d,(g,h)\mapsto g\cdot h$ is a bilinear polynomial transformation. For the inequality $\leq$, suppose
that $\alpha\colon Q \oplus R \to S^d$ is a nonzero bilinear polynomial
transformation, where $Q$ and $R$ are irreducible of degrees $e<d$
and $d-e<d$. So $Q$ and $R$ are Schur functors corresponding to
Young diagrams with $e$ and $d-e$ boxes, respectively, and $Q \otimes
R$ admits a nonzero linear morphism to $S^d$, whose Young diagram is a row of $d$ boxes. The
Littlewood-Richardson rule then implies that the Young diagrams of $Q$
and $R$ must be a single row as well, so that $Q=S^e$ and $R=S^{d-e}$,
and also that there is (up to scaling) a unique morphism $Q \otimes R
= S^e \otimes S^{d-e} \to S^d$, namely, the one corresponding to the
polynomial transformation $(g,h) \mapsto g\cdot h$.
\end{proof}

The strength of a tensor in $P$ quickly becomes very difficult when $P$ is not irreducible.

\begin{ex}
Take $P=(S^d)^{\oplus e}$ for some integer $e\geq 1$. Then the strength of a tuple $(f_1,\ldots,f_e)\in P(V)$ is the minimum number $k\geq0$ such that
\[
f_1,\ldots,f_e\in\mathrm{span}\{g_1,\ldots,g_k\}
\]
where $g_1,\ldots,g_k\in S^d(V)$ are reducible polynomials.
\end{ex}

\begin{ex}
Consider $P=S^2\oplus\Wedge^2$, so that $P(V)=V \otimes V$,
and assume that $K$ is algebraically closed. The only possibilities for $Q$ and $R$ are
$Q(V)=R(V)=V$. The bilinear polynomial transformations $\alpha:Q \oplus R \to P$ are of the form
\[
\alpha(u,v)=a u \otimes v + b v \otimes u=c(u\otimes v+v\otimes u)+d(u\otimes v-v\otimes u)
\]
for certain $a,b,c,d \in K$. We note that $\str(A)=\lceil\rk(A)/2\rceil$ when $A\in S^2(V)$ and $\str(A)=\rk(A)/2$ when $A\in \Wedge^2(V)$. In general, we have
\[
\rk(A)/2,\rk(A+A^\top)/2,\rk(A-A^\top)/2\leq \str(A)\leq \rk(A),\rk(A+A^\top)/2+\rk(A-A^\top)/2
\]
for all $A\in V\otimes V$, where each bound can hold with equality. For example, for the matrix
\[
A=\begin{pmatrix}
0&1\\0&0\\&&\ddots\\&&&0&1\\&&&0&0
\end{pmatrix}
\]
we have $\rk(A+A^\top)/2=\rk(A-A^\top)/2=\str(A)=\rk(A)$.
\end{ex}

\begin{ex}
Again take $P=S^2\oplus\Wedge^2$ and consider $P(K^2)=K^{2\times 2}$. Assume $K$ is algebraically closed. The matrix
\[
A =\begin{pmatrix}1&x\\0&1\end{pmatrix}
\]
clearly has strength $\leq 2$. We will show that $A$ has strength $2$ whenever $x = \pm 2$ and strength $1$ otherwise. In particular, this shows that the subset of $P(K^2)$ of matrices of strength $\leq1$ is not closed.

Suppose $A$ has strength $1$. Then we can write $A$ as $a u \otimes v + b v \otimes u$ with $a, b \in K$ and $v, u \in K^2$. Let $e_1, e_2$ be the standard basis of $K^2$. Without loss of generality, we may assume that $u = e_1 + \lambda e_2$ and $v = e_1 + \mu e_2$ for some $\lambda,\mu\in K$. We get
\begin{align*}
a+b&=1,&a\mu+b\lambda&=x,\\
a\lambda+b\mu&=0,&\lambda\mu&=1.
\end{align*}
Using $\lambda=\mu^{-1}$ and $b = 1-a$, we are left with $a\mu^2 + (1-a) = x\mu$ and $a + (1-a)\mu^2 = 0$. The latter gives us $\mu\neq\pm1$ and $a = \mu^2/(\mu^2-1)$. We get $\mu^2 + 1=x\mu$. Now, if $x\neq\pm2$, then such a $\mu\neq\pm1$ exists. So in this case $A$ indeed has strength $1$. If $x = \pm 2$, the only solution is $\mu = \pm 1$. Hence $A$ has strength $2$ in this case.
\end{ex}

\subsection{Subsets of polynomial functors}

\begin{de}
Let $P$ be a polynomial functor. A {\em subset} of $P$ consists of a
subset $X(U) \subseteq P(U)$ for each $U \in \Vec$ such that for all
$\phi \in \Hom(U,V)$ we have $P(\phi)(X(U)) \subseteq X(V)$. It is {\em
closed} if each $X(U)$ is Zariski-closed in $P(U)$.
\end{de}

\begin{ex}
Fix integers $d\geq 2$ and $k\geq0$.
The elements in $S^d(V)$ of strength $\leq k$
form a subset of $S^d$. This set is closed for $d=2,3$ but
not for $d=4$; see \cite{BBOV:strengthnotclosed}.
\end{ex}

\begin{ex} \label{ex:PosDef}
Take $K=\RR$ and let $X(V)$ be the set of positive semidefinite elements
in $S^2(V)$, i.e., those that are sums of squares of elements
of $V$. Then $X$ is a subset of $S^2$.
\end{ex}

\subsection{Kazhdan-Ziegler's theorem: universality of strength}

\begin{thm}[Kazhdan-Ziegler {\cite[Theorem 1.9]{KZ:strength2}}] \label{thm:KZ}
Let $d \geq2$ be an integer. Assume that $K$ is algebraically closed and
of characteristic $0$ or $>d$.  Let $X$ be a subset of
$S^d$. Then either $X=S^d$ or else there exists an integer $k\geq 0$
such that each polynomial in each $X(U)$ has strength $\leq k$.
\end{thm}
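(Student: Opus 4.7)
My plan is to deduce the theorem from topological Noetherianity of $S^d$ together with a structural description of proper closed $\GL$-stable subsets. Since the functor $V \mapsto S^d(V)$ is continuous, the pointwise Zariski closure $\overline X$ of $X$ is again a closed $\GL$-stable subset of $S^d$, and it suffices to bound the strength uniformly on $\overline X$, because $X \subseteq \overline X$ and the strength-$\leq k$ locus (though not generally closed) is itself a functorial subset of $S^d$. So I may assume $\overline X \subsetneq S^d$, i.e.\ $\overline X(V_0) \subsetneq S^d(V_0)$ for some $V_0$.

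The central step I would try to prove is the following parameterization lemma: every proper closed $\GL$-stable subset $Y$ of $S^d$ is contained in the union of finitely many images of polynomial transformations $\alpha_j\colon P_j \to S^d$, where each $P_j$ is a finite direct sum of Schur functors $S_\lambda$ with $0 < |\lambda| < d$. To produce such a parameterization I pass to the inverse limit $S^d_{\infty}$, fix a generic point $f$ of a component of $Y$, and argue that its $\GL_\infty$-orbit closure --- being a proper $\GL_\infty$-stable closed subset --- is captured by data of strictly smaller complexity in the $\lessdot$ order. The monoid of linear endomorphisms of $K^\infty$ acting on $S^d_\infty$ provides the device needed to move points of different dimensions into a common frame, and Draisma's topological Noetherianity of polynomial functors guarantees that the induction on closed subsets terminates.

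Once the parameterization is in hand, the conclusion is essentially formal. By the Littlewood--Richardson rule (valid in characteristic $0$ or $>d$), any $\GL$-equivariant linear map $S^{d_1}\otimes\cdots\otimes S^{d_r}\to S^d$ with positive $d_i$ summing to $d$ is a scalar multiple of the multiplication map; more generally, any polynomial transformation from a sum of lower-degree Schur functors into $S^d$ can be expanded, via its multi-homogeneous components, into a bounded sum of products $g\cdot h$ of forms of positive degree strictly less than $d$. Applying this to each of the finitely many $\alpha_j$ yields a uniform strength bound on $\overline X(V)$ independent of $V$, which is the desired conclusion.

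The hard step, where almost all the work sits, is the parameterization lemma. Producing a finite family of simpler polynomial transformations whose images \emph{functorially} cover a proper closed subset is much stronger than the bare existence of a $\GL$-invariant equation vanishing on $Y$: the parameterization has to work uniformly in the dimension of the ambient vector space. That level of control is exactly what the paper's polynomial functor framework --- shifts, the $\lessdot$ order on tuples of partitions, and orbit closures in inverse limits --- is designed to provide, and it is the essential content of Main Theorem I specialized to $\underline\lambda = [(d)]$.
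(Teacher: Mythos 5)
Your proposal correctly identifies the parameterization lemma for proper closed $\GL$-stable subsets as the technical heart, and your reduction via Littlewood--Richardson to bound strength on the images of the resulting transformations $\alpha_j$ matches the paper's handling of the bounded-strength conclusion. However, there is a genuine gap at the very first step. You write that ``it suffices to bound the strength uniformly on $\overline X$'' and then ``so I may assume $\overline X \subsetneq S^d$,'' but you never address what happens when $\overline{X}(V) = S^d(V)$ for all $V$ while $X$ itself might still be a proper subset. The theorem asserts a dichotomy --- either $X = S^d$ on the nose, or strength is bounded --- and a pointwise-dense $\GL$-stable subset that fails to be all of $S^d$ is not obviously impossible. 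Your argument simply dismisses this case without comment.

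This is precisely the difference between the weaker result for closed $X$ (Theorem~4 of \cite{BDE:boundedstrength}, which your outline does establish) and the theorem as stated. Closing the gap is the essential content of Theorem~\ref{thm:DenseImpliesEqual}: if $X(V)$ is dense in $P(V)$ for every $V$, then $X(V)$ actually \emph{equals} $P(V)$ for every $V$. The paper proves this by an incidence-variety argument: for a fixed target $q\in P(K^n)$ one studies the variety $Z_k$ of pairs $(\phi,r)$ with $\phi\colon K^k\to K^n$ of full rank and $P(\phi)r=q$, shows the projection $Z_k\to P(K^k)$ is dominant for $k\gg n$ using the linear-approximation functor $P'$ and a surjectivity lemma for the differential (this is exactly where the characteristic-$0$-or-$>d$ hypothesis enters, via Example~\ref{ex:CharP}), and then intersects the dense constructible image with the dense set $X(K^k)$ to conclude $q\in X(K^n)$. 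Without this step your argument proves only the closed-subset variant, not the Kazhdan--Ziegler theorem as stated.
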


This theorem is a strengthening of \cite[Theorem 4]{BDE:boundedstrength}, where the
additional assumption is that $X$ is closed. The condition that $K$ be
algebraically closed cannot be dropped, e.g.~by Example~\ref{ex:PosDef}:
there is no uniform upper bound on the strength of positive definite
quadratic forms. The condition on the characteristic can also not be
dropped, but see Remark~\ref{re:Char}.

\begin{cor}[Kazhdan-Ziegler, universality of strength]
With the same assumptions on $K$, for every fixed number of variables $m\geq1$ and degree $d\geq2$ there exists an $r\geq0$ such that for any number of variables $n\geq 1$, any polynomial $f \in K[x_1,\ldots,x_n]_d$ of strength $\geq r$ and any polynomial $g \in K[y_1,\ldots,y_m]_d$ there exists a linear variable substitution $x_j
\mapsto \sum_{i} c_{ij} y_i$ under which $f$ specialises to $g$.
\end{cor}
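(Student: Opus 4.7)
The plan is to derive the corollary from Theorem~\ref{thm:KZ} by applying it to a single, carefully chosen subset of $S^d$. For each $V \in \Vec$, I would set
\[
W(V) := \bigl\{\, f \in S^d(V) \;:\; \exists\, g \in S^d(K^m) \text{ with } S^d(\phi)(f) \neq g \text{ for all } \phi \in \Hom(V,K^m) \,\bigr\},
\]
the collection of polynomials that fail to specialise to at least one $g$. The first routine check is that $W$ is a subset in the sense defined above: if $f \in W(U)$ with witness $g_0$ and $\phi \colon U \to V$ is any linear map, then any $\psi \colon V \to K^m$ with $S^d(\psi)\bigl(S^d(\phi)(f)\bigr) = g_0$ would give $S^d(\psi\phi)(f) = g_0$, contradicting $f \in W(U)$; hence $S^d(\phi)(f) \in W(V)$.

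The second step, which I expect to be the main obstacle, is to exhibit a single polynomial witnessing $W \neq S^d$, i.e.\ one $f$ that specialises to \emph{every} $g \in S^d(K^m)$. I would take $n := \binom{m+d-1}{d}$ and $f := x_1^d + \cdots + x_n^d \in S^d(K^n)$. Under the hypotheses on $K$, the $d$-th powers of linear forms span $S^d(K^m)$, so an arbitrary $g$ can be written as $g = \sum_{i=1}^n c_i L_i^d$ for scalars $c_i \in K$ and linear forms $L_i \in K[y_1,\ldots,y_m]_1$. Since $K$ is algebraically closed, we may choose $\zeta_i \in K$ with $\zeta_i^d = c_i$, so that $g = \sum_{i=1}^n (\zeta_i L_i)^d$. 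The linear substitution $x_i \mapsto \zeta_i L_i$ then sends $f$ to $g$, showing $f \notin W(K^n)$ and hence $W \neq S^d$.

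Finally, Theorem~\ref{thm:KZ} applied to $W$ yields an integer $k \geq 0$, independent of $V$, such that every $f \in W(V)$ has strength at most $k$. Setting $r := k+1$, any polynomial $f$ of strength $\geq r$ lies outside $W(V)$, which by definition means that $f$ specialises to every $g \in S^d(K^m)$ via some linear substitution $x_j \mapsto \sum_i c_{ij} y_i$, as required.
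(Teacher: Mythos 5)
Your proof is correct and follows essentially the same strategy as the paper: define the subset $X(V)$ of polynomials whose specialisation map to $S^d(K^m)$ fails to be surjective, verify it is a subset, exhibit a single polynomial outside it, and apply Theorem~\ref{thm:KZ}. The only difference is the witness: you use $f = x_1^d + \cdots + x_n^d$ in $n = \dim S^d(K^m)$ variables via the Waring-type spanning of $d$-th powers, whereas the paper takes a sum of $\dim S^d(K^m)$ squarefree degree-$d$ monomials in $d \cdot \dim S^d(K^m)$ distinct variables; both arguments are valid under the stated hypotheses on $K$.
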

\begin{proof}
For each $U \in \Vec$, define $X(U) \subseteq S^d(U)$ as the set of all $f$
such that the map
\begin{align*}
\Hom(U,K^m) &\to S^d(K^m)\\
\phi &\mapsto S^d(\phi)f
\end{align*}
is {\em not} surjective. A straightforward computation shows that this
is a subset of~$S^d$. It is not all of~$S^d$, because if we take $U$ to
be of dimension $d \cdot \dim S^d(K^m)$, then in $S^d(U)$ we can construct
a sum $f$ of $\dim S^d(K^m)$ squarefree monomials in distinct variables
and specialise each of these monomials to a prescribed multiple of a basis
monomial in $S^d(K^m)$. Hence $f \not \in X(U)$.  By Theorem~\ref{thm:KZ},
it follows that the strength of elements of $X(U)$ is uniformly bounded.
\end{proof}

\subsection{Our generalisation: universality for polynomial functors}

Let $P,Q$ be polynomial functors. We say that $Q$ is smaller than $P$, denoted $Q\lessdot P$, when $P$ and $Q$ are not (linearly) isomorphic and $Q_d$ is a quotient of $P_d$ for the highest degree $d$ where $P_d$ and $Q_d$ are not isomorphic.
We say that a polynomial functor $P$ is pure when $P(\{0\})=\{0\}$.

\begin{re}\label{re:degen+finstrength}
Let $Q\lessdot P$ be polynomial functors and suppose that $P$ is homogeneous of~degree $d>0$. Then $Q_d$ must be a quotient of $P_d$. So we see that $Q\oplus R\lessdot P$ for any polynomial functor $R$ of degree $<d$.
\end{re}

The following is our first main result.

\begin{thm}[Main Theorem I] \label{thm:MainI}
Assume that $K$ is algebraically closed of characteristic zero. Let~$X$ be a subset of a pure polynomial functor $P$ over $K$. Then either $X(U)=P(U)$ for all $U \in \Vec$ or
else there exist finitely many polynomial functors $Q_1,\ldots,Q_k\lessdot P$ and polynomial transformations $\alpha_i\colon Q_i \to P$ with $X(U) \subseteq \bigcup_{i=1}^k \im(\alpha_{i,U})$ for all $U\in\Vec$. In the latter case, $X$ is contained in a proper closed subset of $P$.

If we assume furthermore that $P$ is irreducible, then in the second
case there exists a integer $k\geq 0$ such that for all $U \in \Vec$
and all $p \in X(U)$ the strength of $p$ is at most $k$.
\end{thm}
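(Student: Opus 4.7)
The plan is to reduce Main Theorem~\ref{thm:MainI} to the case of Zariski-closed subsets $X$, and then to invoke a parametrisation result for proper closed subsets of pure polynomial functors.

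\textbf{Reduction to the closed case.} Let $\bar X$ be the level-wise Zariski closure of $X$: set $\bar X(U)$ to be the Zariski closure of $X(U)$ in $P(U)$. Continuity of the maps $P(\phi)$ guarantees that $\bar X$ is again a subset of $P$ in the sense of the paper, and it is closed. If $\bar X(U)\neq P(U)$ for some $U$, then $\bar X$ is a proper closed subset of $P$, and we apply the parametrisation theorem for proper closed subsets of pure polynomial functors --- the closed-case analogue of what is being proven, generalising \cite[Theorem~4]{BDE:boundedstrength} and building on the topological Noetherianity of polynomial functors \cite{D:topnoeth,BDES:geometrypolyrep} --- to conclude that any proper closed subset of $P$ is contained in a finite union $\bigcup_i\im(\alpha_{i,U})$ with $\alpha_i\colon Q_i\to P$ polynomial transformations and $Q_i\lessdot P$. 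Since $X\subseteq\bar X$, this yields the stated conclusion in this sub-case, and also shows $X$ lies in a proper closed subset.

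\textbf{Density forces surjectivity.} It remains to show that $\bar X=P$ forces $X=P$. Fix $V'\in\Vec$ and a target $q\in P(V')$; the goal is to produce a vector space $V$, a proper closed subset $W_q\subsetneq P(V)$, and, for each $p\in P(V)\setminus W_q$, a linear map $\phi\colon V\to V'$ with $P(\phi)(p)=q$. Given such a $W_q$, the Zariski-density of $X(V)$ in $P(V)$ provides some $p\in X(V)\setminus W_q$, and pushforward-stability of $X$ gives $q=P(\phi)(p)\in X(V')$, as required. The construction of $W_q$ is the main obstacle, and amounts to a universality-of-high-strength statement generalising Kazhdan-Ziegler's Theorem~\ref{thm:KZ}. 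The attack is by induction on $P$ along a well-founded refinement of $\lessdot$: the closed-subset parametrisation, applied inside $P(V)$ with $\dim V$ taken sufficiently large, produces a candidate $W_q=\bigcup_i\im(\alpha_{i,V})$ with $Q_i\lessdot P$, and the inductive hypothesis applied to each $Q_i$ lifts specialisations from $Q_i(V)$ to the prescribed $q\in P(V')$; the base case is Kazhdan-Ziegler's theorem for $P=S^d$.

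\textbf{Strength bound for irreducible $P$.} When $P$ is irreducible of degree $d$, any $Q\lessdot P$ has $Q_d$ a proper quotient of the irreducible $P_d=P$, hence $Q_d=0$ and $\deg Q<d$. Standard polarisation of each $\alpha_i\colon Q_i\to P$, combined with the direct-sum decomposition of $Q_i$ and of the relevant symmetric powers of its homogeneous summands into irreducibles, expresses $\alpha_i(q)$ as a finite sum of terms $\beta_{i,j}(q'_{i,j},r'_{i,j})$ with each $\beta_{i,j}\colon Q'_{i,j}\oplus R'_{i,j}\to P$ a $\GL$-equivariant bilinear polynomial transformation between irreducible polynomial functors whose positive degrees sum to $d$. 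These are precisely the building blocks of Definition~\ref{de:Strength2}, so every element of $\im(\alpha_{i,V})$ has strength bounded by a constant depending only on $\alpha_i$; the maximum over the finitely many $\alpha_i$ yields the uniform bound $k$.
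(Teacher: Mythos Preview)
Your reduction to the closed case and the strength-bound argument for irreducible $P$ are both correct and essentially match the paper's proof. The divergence is in the step ``density forces surjectivity'', i.e., the claim that if $\bar X=P$ then $X=P$.

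The paper proves this as a separate result (Theorem~\ref{thm:DenseImpliesEqual}) by a direct geometric argument: for a target $q\in P(K^n)$ one studies the incidence variety
\[
Z_k=\{(\phi,r)\in\Hom(K^k,K^n)\times P(K^k)\mid \rk\phi=n,\ P(\phi)r=q\}
\]
and shows that the projection $Z_k\to P(K^k)$ is dominant for $k\gg n$ via a tangent-space computation. The key tool is the ``derivative'' functor $P'$ defined by $P(U\oplus V)=P(U)\oplus(P'(U)\otimes V)\oplus(\text{higher order in }V)$; one exhibits an explicit $r\in P'(U)\otimes V$ for which a certain linear map $\Omega_{P,V,r}\colon\Hom(V,U)\to P(U)$ is surjective, and this forces the generic fibre of $Z_k\to P(K^k)$ to have the right dimension. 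Chevalley's theorem then gives a dense open image meeting the dense set $X(K^k)$.

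Your proposed attack on this step---induction on $P$ along $\lessdot$ with base case Kazhdan--Ziegler---has a gap as written. You invoke the closed-subset parametrisation theorem to ``produce a candidate $W_q=\bigcup_i\im(\alpha_{i,V})$'', but that theorem takes a \emph{proper closed} subset of $P$ as input, and you have not said which one. The natural candidate is the bad locus $\{p: p\text{ does not specialise to }q\}$, but establishing that this locus is proper is precisely the content of the step you are trying to prove. Moreover, your inductive step ``the inductive hypothesis applied to each $Q_i$ lifts specialisations from $Q_i(V)$ to the prescribed $q\in P(V')$'' does not parse: the target $q$ lives in $P(V')$, not in $Q_i(V')$, so the inductive hypothesis for $Q_i$ says nothing about hitting $q$. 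Finally, Kazhdan--Ziegler only covers $S^d$, not arbitrary irreducible Schur functors, so even the base case is incomplete. There may be a way to make an inductive scheme work, but substantial new ideas are needed beyond what you have sketched; the paper's incidence-variety argument sidesteps all of this.
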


This is a strengthening of a theorem from the upcoming paper \cite{BDES:geometrypolyrep} (also appearing in the first author's thesis \cite[Theorem 4.2.5]{B:thesis}), where the additional
assumption is that $X$ be closed.

\begin{re}
When $P$ is irreducible of degree $1$, then $P(U)=U$. In this case, the subsets of $P$ are $P$ and $\{0\}$. So indeed, the elements of a proper subset of $P$ have bounded strength, namely $0$.
\end{re}

Again, the condition that $K$ be algebraically closed cannot be dropped,
and neither can the condition on the characteristic; however, see
Remark~\ref{re:Char}. Main Theorem I has the same corollary as
Theorem~\ref{thm:KZ}.

\begin{cor} \label{cor:Surjective}
With the same assumptions as in Main Theorem I, let $U\in\Vec$ be a fixed vector space. Then there exist finitely many polynomial functors $Q_1,\ldots,Q_k\lessdot P$ and polynomial transformations $\alpha_i\colon Q_i \to P$ such that for every $V \in \Vec$ and
every $f \in P(V)$ that is {\em not} in $\bigcup_{i=1}^k \im(\alpha_{i,V})$
the map $\Hom(V,U) \to P(U), \phi \mapsto P(\phi)f$ is surjective.

If $P$ is irreducible, then the condition that $f \not \in\bigcup_{i=1}^k \im(\alpha_{i,V})$
can be replaced by the condition that $f$ has strength greater than some
function of $\dim U$ only.
\end{cor}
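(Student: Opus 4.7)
My plan is to apply Main Theorem~I to the subset $X$ of $P$ defined by
\[
X(V) := \{f \in P(V) : \phi \mapsto P(\phi)f \text{ is not surjective as a map } \Hom(V,U) \to P(U)\}.
\]
The first task is to verify that $X$ is indeed a subset of $P$. For any linear map $\psi\colon V\to W$ and any $f\in X(V)$, every $\eta \in \Hom(W,U)$ satisfies $P(\eta)(P(\psi)f) = P(\eta\circ\psi)f$ by functoriality, so the image of $\eta\mapsto P(\eta)(P(\psi)f)$ over $\Hom(W,U)$ sits inside the image of $\phi\mapsto P(\phi)f$ over $\Hom(V,U)$. The former therefore inherits non-surjectivity from the latter, giving $P(\psi)f \in X(W)$.

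The substantive step---which I expect to be the main obstacle---is producing a witness that $X \neq P$. I will construct one explicitly. Decompose $P = \bigoplus_{d \geq 1} P_d$ into homogeneous components; there is no degree-zero part because $P$ is pure. Pick a basis $v_1,\dots,v_N$ of $P(U)$ consisting of homogeneous vectors, say $v_j \in P_{d_j}(U)$. Set $V := U^{\oplus N}$, write $\iota_j\colon U\to V$ for the inclusion of the $j$-th summand, and take
\[
f := \sum_{j=1}^N P(\iota_j)(v_j) \in P(V).
\]
For any $\phi\in\Hom(V,U)$, linearity of $P(\phi)$ together with functoriality gives $P(\phi)f = \sum_j P(\phi\circ\iota_j)(v_j)$. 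Given an arbitrary target $g=\sum_j a_j v_j \in P(U)$, I would pick $c_j\in K$ with $c_j^{d_j} = a_j$---available because $K$ is algebraically closed---and let $\phi$ be the unique linear map with $\phi\circ\iota_j = c_j\cdot\id_U$; then each summand equals $c_j^{d_j} v_j = a_j v_j$ by the defining property of $P_{d_j}$, so $P(\phi)f = g$. Hence $f \notin X(V)$.

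With $X \neq P$ established, Main Theorem~I directly furnishes finitely many $Q_1,\dots,Q_k \lessdot P$ and polynomial transformations $\alpha_i\colon Q_i \to P$ satisfying $X(V) \subseteq \bigcup_{i=1}^k \im(\alpha_{i,V})$ for every $V \in \Vec$, which is the first assertion of the corollary. When $P$ is irreducible, the strength clause of Main Theorem~I yields an integer $k \geq 0$ bounding $\str(p)$ for all $p \in X(V)$ uniformly in $V$; because $X$ is determined entirely by the fixed $U$ (and $P$), this $k$ depends only on $\dim U$, and any $f$ with $\str(f) > k$ therefore falls outside $X$ and maps surjectively onto $P(U)$, giving the stated strength reformulation.
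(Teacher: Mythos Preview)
Your proof is correct and follows the same overall shape as the paper's---define the ``non-surjective'' locus $X$, check it is a subset of $P$, exhibit a witness that $X\neq P$, and invoke Main Theorem~I---but your construction of the witness is genuinely different. The paper first treats the case where $P$ is an irreducible Schur functor, using that $P(U)$ is spanned by $\{P(\phi)p:\phi\in\Hom(K^d,U)\}$ for a single nonzero $p\in P(K^d)$, and then handles a general pure $P$ by induction on a direct-sum decomposition into Schur functors. Your argument bypasses the representation theory entirely: by taking a homogeneous basis of $P(U)$, placing each basis vector in its own copy of $U$, and then hitting an arbitrary target via the scalar maps $c_j\cdot\id_U$ with $c_j^{d_j}=a_j$, you exploit only the homogeneity grading and the algebraic closedness of $K$. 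This is more elementary and shorter; the paper's route, on the other hand, yields the explicit dimension bound $\dim V\geq\deg(P)\cdot\dim P(U)$ and does not rely on extracting roots in $K$ (so that particular step would survive over non-closed fields, even though the corollary as a whole does not). Your verification that $X$ is a subset and your treatment of the strength clause match the paper's reasoning.
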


\subsection{Limits and dense orbits}

Let $P$ be a pure polynomial functor over $K$.  There is
another point of view on closed subsets of $P$, which involves
limits that we define now.

\begin{de}
We define $P_\infty:=\varprojlim_n P(K^n)$, where the map $P(K^{n+1}) \to
P(K^n)$ is $P(\pi_n)$ with $\pi_n\colon K^{n+1} \to K^n$ the projection map forgetting
the last coordinate. We equip $P_\infty$ with the inverse limit of the
Zariski topologies on the $P(K^n)$, which is itself a Zariski topology
coming from the fact that $P_\infty=(\bigcup_n P(K^n)^*)^*$.
We also write $P(\pi_n)$ for the projection map $P_\infty \to
P(K^n)$; this will not lead to confusion. A polynomial
transformation $\alpha\colon P \to Q$ naturally yields a
continuous map $P_\infty \to Q_\infty$ also denoted by
$\alpha$.
\end{de}

If $P=S^d$, then the elements of $P_\infty$ can be thought of
as homogeneous series of degree $d$ in infinitely many variables
$x_1,x_2,\ldots$. Here, closed subsets of $P_\infty$ are defined by
polynomial equations in the coefficients of these series.

On $P_\infty$ acts the group $\GL_\infty=\bigcup_n \GL_n$,
where $\GL_n$ is embedded into $\GL_{n+1}$ via the map
\[
g \mapsto \begin{pmatrix} g & 0 \\ 0 & 1 \end{pmatrix}.
\]
Indeed, with this embedding the map $P(K^{n+1}) \to P(K^n)$
in the definition of $P_\infty$ is $\GL_n$-equivariant, and
this yields the action of $\GL_\infty$ on the projective
limit.
In the case of degree-$d$ series, an element $g\in\GL_n\subset\GL_\infty$ maps each of the first $n$ variables $x_i$ to a linear combination of
$x_1,\ldots,x_n$ and the remaining variables to themselves.

The map
that sends a closed subset $X$ of $P$ to the closed subset $X_\infty:=
\varprojlim_n X(K^n)$ of $P_\infty$ is a bijection with the collection
of closed $\GL_\infty$-stable subsets of $P_\infty$ \cite[Proposition 1.3.28]{B:thesis}. Hence closed subsets of polynomial functors can also be
studied in this infinite-dimensional setting.

\begin{ex}
On degree-$d$ forms, $\GL_\infty$ clearly has dense orbits, such as
that of
\[
f=x_1 x_2 \cdots x_d + x_{d+1} x_{d+2} \cdots x_{2d} + \ldots
\]
The reason is that this series can be specialised to any degree-$d$ form
{\em in finitely many variables} by linear variable substitutions. This
implies that the image of $\GL_\infty\! \cdot f$ in each $S^d(K^n)$ is dense.
Hence $\GL_\infty\!\cdot f$ is dense in $S^d_\infty$.
\end{ex}

For every pure polynomial functor $P$, the group $\GL_\infty$ has dense orbits on
$P_\infty$---in fact, uncountably many of them! See \cite[\S4.5.1]{B:thesis}. They have
the following interesting property.

\begin{cor}\label{cor:surjective_infty}
Suppose that $\GL_\infty\!\cdot  p$ is dense in $P_\infty$. Then for each integer $n\geq1$, the image of $\GL_\infty\!\cdot p$ in $P(K^n)$ is all of $P(K^n)$.
\end{cor}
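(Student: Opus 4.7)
The idea is: given any $q\in P(K^n)$, produce a surjective linear map $\phi\colon K^M\to K^n$ (for some $M$) with $P(\phi)(p_M)=q$, where $p_M:=P(\pi_M)(p)\in P(K^M)$; realising $\phi$ as $\pi\circ g$ with $g\in\GL_M\subseteq\GL_\infty$ and $\pi\colon K^M\to K^n$ the standard projection then gives $q=(g\cdot p)_n$, placing $q$ in the image of $\GL_\infty\cdot p$. The argument has two steps: \emph{(i)} for all sufficiently large $M$, the polynomial map
\[
\mu_M\colon\Hom(K^M,K^n)\to P(K^n),\qquad \phi\mapsto P(\phi)(p_M),
\]
is surjective; \emph{(ii)} for $M$ larger still, $\mu_M$ remains surjective when restricted to the open subset of rank-$n$ maps.

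For step~(i), define $X(V)\subseteq P(V)$ as the set of $f$ for which $\phi\mapsto P(\phi)(f)$ does not surject onto $P(K^n)$. The composition rule $P(\phi)\circ P(\psi)=P(\phi\circ\psi)$ shows $X$ is a subset of $P$ in the sense of the paper. By Corollary~\ref{cor:Surjective} there exist tensors $f\in P(V)$, for $V$ of high enough dimension, whose specialisation onto $P(K^n)$ is surjective, so $X\ne P$, and Main Theorem~I encloses $X$ in a proper closed subset $Z\subseteq P$. The inverse limit $Z_\infty\subsetneq P_\infty$ is closed and $\GL_\infty$-stable, so the density of $\GL_\infty\cdot p$ forces $p\notin Z_\infty$; equivalently, $p_N\notin Z(K^N)\supseteq X(K^N)$ for some $N$. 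For every $M\ge N$, writing $\rho\colon K^M\to K^N$ for the standard projection gives $p_N=P(\rho)(p_M)$ and hence $\mu_N(\phi)=\mu_M(\phi\circ\rho)$; surjectivity of $\mu_N$ thus implies surjectivity of $\mu_M$.

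For step~(ii), take $M\ge N$ with additionally $M>\dim P(K^n)+n-1$. Both $\Hom(K^M,K^n)\cong K^{nM}$ and $P(K^n)$ are irreducible, and $\mu_M$ is surjective; the fibre dimension theorem gives $\dim\mu_M^{-1}(q)\ge nM-\dim P(K^n)$ for every $q\in P(K^n)$. The non-surjective locus in $\Hom(K^M,K^n)$ is the determinantal variety of rank-at-most-$(n-1)$ matrices, an irreducible variety of dimension $(n-1)(M+1)$. The inequality $nM-\dim P(K^n)>(n-1)(M+1)$, equivalent to $M>\dim P(K^n)+n-1$, ensures that no fibre of $\mu_M$ fits inside this locus, so every $q\in P(K^n)$ admits a surjective preimage $\phi$. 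Extending such a rank-$n$ map $\phi$ to an invertible $g\in\GL_M$ whose first $n$ rows are $\phi$ yields $P(\phi)(p_M)=P(\pi)(P(g)(p_M))=(g\cdot p)_n$, as required. The main technical obstacle is step~(ii), which is resolved by the standard fibre dimension theorem once $M$ is chosen large enough relative to $\dim P(K^n)$ and $n$.
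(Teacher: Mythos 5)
Your proof is correct, and it takes a genuinely different route from the paper's. The paper's argument is much shorter: it defines $X(V)=\{P(\phi)p_m\mid m\geq 1,\ \phi\in\Hom(K^m,V)\}$, observes that $X$ is a subset of $P$ whose instances $X(V)$ are all dense (since they contain the projections of the dense orbit), and invokes Main Theorem~I (really Theorem~\ref{thm:DenseImpliesEqual}) to conclude $X=P$; it then identifies $X(K^n)$ with the image of $\GL_\infty\cdot p$ in $P(K^n)$. That last identification is exactly the delicate point: an arbitrary $\phi\in\Hom(K^m,K^n)$ need not have full rank, so it is not literally obvious that $X(K^n)$ equals the orbit image rather than a priori containing it. You sidestep this by working instead with the ``non-surjectivity'' subset from Corollary~\ref{cor:Surjective}, enclosing it in a proper closed subset $Z$ via Main Theorem~I, passing to $Z_\infty\subsetneq P_\infty$ through the bijection between closed subsets of $P$ and $\GL_\infty$-stable closed subsets of $P_\infty$, deducing $p_N\notin X(K^N)$ for some $N$, and then running a clean fibre-dimension count against the determinantal variety of rank-$<n$ maps to extract a \emph{surjective} $\phi$ with $P(\phi)p_M=q$. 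In effect, you re-import the dimension-count mechanism that already powers the proof of Theorem~\ref{thm:DenseImpliesEqual}, but apply it one more time in a form tailored to the corollary, which makes the passage to full-rank maps (and hence to genuine $\GL_\infty$-translates of $p$) completely explicit. The cost is length; the gain is that every step is self-contained and the orbit-image identification needs no separate justification. One tiny presentational point: you cite the \emph{statement} of Corollary~\ref{cor:Surjective} to get $X\neq P$, but it is really the argument inside its proof (the construction of a tensor with surjective specialisation onto $P(K^n)$ when $\dim V\geq\deg(P)\cdot\dim P(K^n)$) that you need; it would be cleaner to cite that construction directly.
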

\begin{proof}
For $V\in\Vec$, define
\[
X(V):= \left\{P(\phi)P(\pi_n)p\mid n\geq 1, \phi\in\Hom(K^n,V)\right\}\subseteq P(V),
\]
which is exactly the image of $\GL_\infty\!\cdot p$ under the
projection $P_\infty \to P(K^m)$ followed by an isomorphism $P(\phi)$,
where $\phi\colon K^m \to V$ is a linear isomorphism. We see that $X$ is a
subset of $P$. For each $V\in\Vec$, the subset $X(V)$ is dense in $P(V)$ since $\GL_\infty\!\cdot  p$ is dense in $P_\infty$. So $X=P$ by Main Theorem I.
\end{proof}

The notion of strength has an obvious generalisation.

\begin{de}
Assume that $\cha K=0$. Let $P$ be a homogeneous polynomial functor. The strength of
a tensor $p \in P_\infty$ is the minimal integer $k\geq0$ such that
\[
p=\alpha_1(q_1,r_1)+\ldots+\alpha_k(q_k,r_k)
\]
for some irreducible polynomial functors $Q_i,R_i$ whose positive degrees sum up to $d$, bilinear polynomial transformations $\alpha_i\colon Q_i \oplus R_i \to P$ and elements $q_i \in Q_{i,\infty}$ and $r_i \in R_{i,\infty}$. If no such $k$ exists, we say that $p$
has infinite strength.
\end{de}

\begin{cor}
Assume that $\cha K=0$ and that $P$ is irreducible of degree $\geq2$.  Then an element of
$P_\infty$ has infinite strength if and only if its $\GL_\infty$-orbit
is dense.
\end{cor}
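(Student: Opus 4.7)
The plan is to prove both implications by contrapositive.

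For \emph{dense orbit $\Rightarrow$ infinite strength}, assume $\str(p)\leq k$. A decomposition $p=\sum_{i=1}^k\alpha_i(q_i,r_i)$ exhibits $p$ as a point in the image of a polynomial transformation
\[
\Psi_{\underline{t}}\colon T_{\underline{t}}:=\bigoplus_{i=1}^k\Hom(Q_i\otimes R_i,P)\oplus\bigoplus_{i=1}^k(Q_i\oplus R_i)\to P,\quad \bigl((\alpha_i),(q_i,r_i)\bigr)\mapsto\sum_i\alpha_i(q_i,r_i),
\]
for some tuple $\underline{t}=(Q_i,R_i)_{i=1}^k$ of irreducible polynomial functors with positive degrees summing to $d$; here the first direct sum, a finite-dimensional vector space, is viewed as a constant (degree-$0$) polynomial functor. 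Since each $\deg Q_i,\deg R_i<d$, $\dim T_{\underline{t}}(V)$ is a polynomial in $\dim V$ of degree $<d$, whereas $\dim P(V)$ is a polynomial in $\dim V$ of degree $d$; hence $\overline{\im(\Psi_{\underline{t}})}$ is a proper closed subset of $P$. Because only finitely many length-$k$ tuples $\underline{t}$ exist and each $P(V)$ is irreducible, their finite union $Z$ remains a proper closed subset of $P$; thus $Z_\infty$ is a proper closed $\GL_\infty$-stable subset of $P_\infty$ containing $p$, contradicting density.

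For \emph{infinite strength $\Rightarrow$ dense orbit}, suppose instead $\overline{\GL_\infty\!\cdot p}\subsetneq P_\infty$. The bijection between closed subsets of $P$ and closed $\GL_\infty$-stable subsets of $P_\infty$ gives $\overline{\GL_\infty\!\cdot p}=Y_\infty$ for some proper closed $Y\subsetneq P$. Main Theorem~I in the irreducible case supplies an integer $k$ such that every element of $Y(V)$ has strength $\leq k$; in particular $P(\pi_n)p\in Y(K^n)$, so $P(\pi_n)p\in\im(\Psi_{\underline{t}^{(n)},n})$ for some length-$k$ tuple $\underline{t}^{(n)}$. Since only finitely many such tuples exist, pigeonhole selects a single $\underline{t}$ for which $P(\pi_n)p\in\im(\Psi_{\underline{t},n})$ holds for infinitely many $n$; the subset property of $\im(\Psi_{\underline{t}})$, together with naturality of $\Psi_{\underline{t}}$, then extends this to \emph{every} $n$.

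The principal obstacle is upgrading these compatible finite-dimensional decompositions to a global one in $P_\infty$; we handle this by quasi-compactness. The fibers $F_n=\Psi_{\underline{t},n}^{-1}(P(\pi_n)p)\subseteq T_{\underline{t}}(K^n)$ are nonempty closed subvarieties, and in characteristic zero the transitions $T_{\underline{t}}(\pi_n)\colon T_{\underline{t}}(K^{n+1})\to T_{\underline{t}}(K^n)$ are surjective linear maps, as polynomial functors preserve linear surjections. Therefore $T_{\underline{t},\infty}$ is a closed subspace of the quasi-compact Tychonoff product $\prod_n T_{\underline{t}}(K^n)$, and each projection $T_{\underline{t},\infty}\to T_{\underline{t}}(K^n)$ is surjective. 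The closed subsets $T_{\underline{t}}(\pi_n)^{-1}(F_n)\subseteq T_{\underline{t},\infty}$ enjoy the finite intersection property, because the intersection of any finite subfamily contains $T_{\underline{t}}(\pi_m)^{-1}(F_m)$ for $m$ its largest index, and this set is nonempty since $F_m\neq\emptyset$. Quasi-compactness then forces $\Psi_{\underline{t},\infty}^{-1}(p)\neq\emptyset$, which yields a decomposition $p=\sum_i\alpha_i(q_i,r_i)$ in $P_\infty$, so $\str(p)\leq k$, contradicting infinite strength.
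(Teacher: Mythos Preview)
Your proof is correct and follows the same overall strategy as the paper: a dimension count for the first implication, and an appeal to Main Theorem~I for the second.

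The one substantive difference lies in the second direction. The paper writes tersely: ``Then it is contained in $X_\infty$ for some proper closed subset $X$ of $P$. Hence $p$ has finite strength by Main Theorem~I.'' But Main Theorem~I only bounds the strength of elements of $X(U)$ for \emph{finite-dimensional} $U$; it gives $\str\bigl(P(\pi_n)p\bigr)\leq k$ for every $n$, not immediately a single strength decomposition of $p$ in $P_\infty$. Your pigeonhole-plus-Tychonoff argument supplies exactly this lifting step, showing that the inverse system of nonempty closed fibres $\Psi_{\underline t,n}^{-1}\bigl(P(\pi_n)p\bigr)$ has nonempty inverse limit. So on this point your version is more complete than the paper's, which tacitly leaves the passage from ``bounded strength at every finite level'' to ``finite strength in $P_\infty$'' to the reader.

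In the first direction your argument is mildly over-engineered: including the constant summands $\Hom(Q_i\otimes R_i,P)$ and taking the union over all length-$k$ tuples $\underline t$ is harmless but unnecessary, since a single $\overline{\im(\Psi_{\underline t})}$ for the particular $\underline t$ arising from the given decomposition of $p$ already furnishes a proper closed $\GL_\infty$-stable subset of $P_\infty$ containing $p$. The paper proceeds this way, fixing the specific $\alpha_i$ and setting $Q=\bigoplus_i Q_i\otimes R_i$.
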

\begin{proof}
If $p \in P_\infty$ has finite strength, then let $\alpha_i\colon Q_i \times
R_i \to P$ be as in the definition above and let
\[
\alpha:=\alpha_1 +\ldots + \alpha_k\colon Q:=\bigoplus_{i=1}^k (Q_i \otimes R_i) \to P
\]
be their sum, so that $p \in \im(\alpha)$. Consider the closed subset $X=\overline{\im(\alpha)}$, i.e., the closed subset defined by $X(V)=\overline{\im(\alpha_V)}$ for all $V\in\Vec$. As $\dim Q(K^n)$ is
a polynomial in $n$ of degree $<d$, while $\dim P(K^n)$ is a polynomial
in $n$ of degree $d$, we see that $X(K^n)$ is a proper subset of $P(K^n)$ for all $n\gg0$. Since $p\in X_{\infty}$, it follows that $\GL_\infty\!\cdot p$ is not dense.

Suppose, conversely, that $\GL_\infty\!\cdot p$ is not dense. Then it is
contained in $X_\infty$ for some proper closed subset $X$ of $P$.
Hence $p$ has finite strength by Main Theorem I.
\end{proof}

\begin{ex}
Let $P,Q$ be homogeneous functors of the same degree $d\geq2$ and let $p\in P_{\infty}$ be an element of infinite strength. Then $(p,0)\in P_{\infty}\oplus Q_{\infty}$ also has infinite strength, but the orbit $\GL_{\infty}\!\cdot(p,0)$ is not dense.
\end{ex}

\begin{re} \label{re:Shift}
In Section~\ref{sec:ProofII} we will use a generalisation of
notation introduced here:
for an integer $m\geq0$ we will write $P_{\infty-m}$ for the
limit $\varprojlim_n P(K^{[n]-[m]})$ over all integers $n \geq m$. This space is isomorphic to
$P_\infty$, but the indices have been shifted by $m$. On $P_{\infty -
m}$ acts the group $\GL_{\infty-m} \cong \GL_\infty$, which is the union
of $\GL(K^{[n]-[m]})$ over all $n \geq m$. We denote the image of an element $p\in P_{\infty-m}$ in $P(K^{[n]-[m]})$ by $p_{[n]-[m]}$. The inclusions $\iota_n\colon K^{[n]-[m]}\to K^n$ sending $v\mapsto (0,v)$ allow us to view $P_{\infty-m}$ as a subset of $P_{\infty}$.
\end{re}

\begin{cor}\label{cor:degen+finstrength}
Let $P$ be a homogeneous polynomial functor of degree $d\geq 2$ and $m\geq0$ an integer. Let $p\in P_{\infty-m}$ be a tensor whose $\GL_{\infty-m}$-orbit is not dense and let $q\in P_{\infty}$ be an element with finite strength. Then the $\GL_{\infty}$-orbit of $p+q\in P_{\infty}$ is also not dense.
\end{cor}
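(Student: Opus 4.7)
The plan is to assume for contradiction that $\GL_\infty\cdot(p+q)$ is dense in $P_\infty$ and derive a dimension-based contradiction: both $p$ and $q$ will be trapped in controlled subsets whose setwise sum is dimensionally too small to fill $P(K^n)$ for $n$ large.

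First I would record structural information about each summand. The hypothesis on $p$, together with the bijection between closed $\GL_{\infty-m}$-stable subsets of $P_{\infty-m}$ and closed subsets of $P$ (as a functor), produces a proper closed subset $X\subsetneq P$ with $p\in X_{\infty-m}$; because $X$ is preserved by $P(\phi)$ for every linear map $\phi$, in particular for the inclusions $\iota_n$ from Remark~\ref{re:Shift}, the element $p$ still lies in $X_\infty$ when viewed in $P_\infty$. The finite-strength decomposition $q=\sum_{i=1}^k\alpha_i(q_i,r_i)$ assembles into a single polynomial transformation $\tilde\alpha\colon T:=\bigoplus_i(Q_i\oplus R_i)\to P$ containing $q$ in its image, and the functorial closure $Y(V):=\overline{\im(\tilde\alpha_V)}$ is a $\GL$-stable closed subset of $P$ with $q\in Y_\infty$. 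Crucially $T$ has polynomial-functor degree at most $d-1$, since every $Q_i,R_i$ is irreducible of degree strictly between $0$ and $d$.

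Next I would combine the density assumption with $\GL$-stability. Corollary~\ref{cor:surjective_infty} applied to $p+q$ shows that, for every $n\geq 1$, the image of $\GL_\infty\cdot(p+q)$ in $P(K^n)$ equals $P(K^n)$. Writing $g\cdot(p+q)=g\cdot p+g\cdot q$ and keeping each summand inside $X(K^n)$ respectively $Y(K^n)$ by $\GL$-stability, I obtain the setwise equality $P(K^n)=X(K^n)+Y(K^n)$ for every $n$.

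The contradiction, and the main obstacle, comes from a dimension count. Because $T$ has polynomial-functor degree at most $d-1$, $\dim Y(K^n)\leq\dim T(K^n)=O(n^{d-1})$. For $X$, Main Theorem~\ref{thm:MainI} supplies polynomial transformations $\beta_j\colon U_j\to P$ with $U_j\lessdot P$ and $X\subseteq\bigcup_j\im(\beta_j)$. Since $P$ is homogeneous of degree $d$, the relation $U_j\lessdot P$ forces $U_{j,d}$ to be a proper quotient of $P_d=P$; I then need to argue, using the Schur decomposition available in characteristic zero and the positivity of Weyl dimensions, that $\dim U_j(K^n)$ has strictly smaller degree-$d$ leading coefficient than $\dim P(K^n)$. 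This yields $\dim X(K^n)\leq \dim P(K^n)-cn^d+O(n^{d-1})$ for some $c>0$, hence $\dim\bigl(X(K^n)+Y(K^n)\bigr)<\dim P(K^n)$ for $n$ large, contradicting the setwise equality above. The hard part will be this last leading-coefficient comparison, which is subtle precisely when $P$ is not irreducible and so admits many proper quotients of almost maximal dimension.
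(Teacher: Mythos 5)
Your argument is correct in outline, but it takes a genuinely different route from the paper's. The paper's proof is much shorter: it writes $p\in\im(\alpha\colon Q_{\infty-m}\to P_{\infty-m})$ with $Q\lessdot P$ (from \cite[Theorem 4.2.5]{B:thesis}) and $q\in\im(\beta\colon R_\infty\to P_\infty)$ with $\deg R<d$, then invokes Remark~\ref{re:degen+finstrength} to conclude $Q\oplus R\lessdot P$, so $p+q$ lies in the image of a single polynomial transformation from a $\lessdot$-smaller functor and hence in a proper closed subset---no contradiction argument or dimension count appears at all. You instead assume density, pass through Corollary~\ref{cor:surjective_infty} to get the Minkowski decomposition $P(K^n)=X(K^n)+Y(K^n)$, and then argue dimensions. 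Both routes ultimately rest on the same parametrization result, but the paper combines the two ingredients \emph{algebraically} via the order $\lessdot$, whereas you combine them \emph{numerically}. What your approach buys is that the dimension count is made explicit; what it costs is invoking the deeper Corollary~\ref{cor:surjective_infty} and a Minkowski-sum estimate that the paper never needs. One correction: the step you flag as ``the hard part'' is in fact routine in characteristic zero. Since $P$ is homogeneous of degree $d$ and the category is semisimple, any proper quotient $U_{j,d}$ of $P_d=P$ omits at least one Schur summand; each degree-$d$ Schur functor has Weyl dimension polynomial of degree exactly $d$ in $n$ with positive leading coefficient, and there are only finitely many such summands in $P$, so a uniform $c>0$ exists. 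The issue you raise about ``many proper quotients of almost maximal dimension'' cannot occur because there are only finitely many isomorphism classes of proper quotients of $P$ up to multiplicity, each dropping a fixed Schur summand; this is precisely the content that makes Remark~\ref{re:degen+finstrength} useful in the paper's one-line argument.
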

\begin{proof}
Note that $p$ is contained in the image of $\alpha\colon Q_{\infty-m} \to P_{\infty-m}$
for some polynomial transformation $\alpha\colon Q \to P$ with $Q\lessdot P$ \cite[Theorem 4.2.5]{B:thesis} and $q$ is contained in the image of $\beta\colon R_{\infty} \to P_{\infty}$ for some polynomial transformation $\beta\colon R \to P$ with $\deg(R)<d$. So since $Q\oplus R\lessdot P$ by Remark~\ref{re:degen+finstrength}, we see that $p+q$ is contained in a proper closed subset of $P$. Hence its $\GL_{\infty}$-orbit is not dense.
\end{proof}

\subsection{Linear endomorphisms}\label{ss:linendo}

Our second goal in this paper is to show that there always exist {\em
minimal} $f$ with dense orbits. This minimality relates to a monoid of
linear endomorphisms extending $\GL_\infty$, as follows.
Elements of $\GL_\infty$ are $\NN\times\NN$ matrices of the block form
\[
\begin{pmatrix} g & 0 \\ 0 & I_{\infty} \end{pmatrix}
\]
where $g \in \GL_n$ for some $n$ and $I_{\infty}$ is the infinite identity
matrix.

\begin{de}
Let $E \supset \GL_\infty$ be the monoid of $\NN \times
\NN$ matrices with the property that each {\em row} contains only finitely
many nonzero entries.
\end{de}

\begin{ex}
For every integer $i\geq 1$, let $\phi_i\in K^{n_i\times m_i}$ be a matrix. Then the block matrix
\[
\begin{pmatrix}
\phi_1\\&\phi_2\\&&\ddots
\end{pmatrix}
\]
is an element of $E$.
\end{ex}

We define an action of $E$ on $P_\infty$ as follows.
Let $p=(p_0,p_1,\ldots) \in P_\infty$ and $\phi \in E$. For each integer $i\geq0$, to compute $q_i$ in
\[
q=(q_0,q_1,\ldots) =P(\phi) p
\]
we choose $n_i\geq 0$ such that all the nonzero entries of the first $i$ rows of $\phi$ are in the first $n_i$ columns. Now, we let
$\psi_i \in K^{i\times n_i}$ be the $i \times n_i$ block
in the upper-left corner of $\phi$, so that
\[
\phi=\begin{pmatrix} \psi_i & 0 \\ * & * \end{pmatrix},
\]
and we set $q_i:=P(\psi_i)p_{n_i}$. Note that if we replace $n_i$
by a larger number $\tilde{n}_i$, then the resulting matrix
$\tilde{\psi}_i$ satisfies $\tilde{\psi}_i = \psi_i \circ
\pi$, where $\pi:K^{\tilde{n}_i} \to K^{n_i}$ is the
projection. Consequently, we then have
\[
P(\tilde{\psi}_i) p_{\tilde{n}_i}=P(\psi_i)P(\pi)p_{\tilde{n}_i} = P(\psi_i) p_{n_i},
\]
so that $q_i$ is, indeed, well-defined. A straightforward
computation shows that, for $\phi,\psi \in E$, we have
$P(\psi) \circ P(\phi)=P(\psi \circ \phi)$, so that $E$ does
indeed act on $P_\infty$.

For infinite degree-$d$ forms, the action of $\phi \in E$ is by linear
variable substitutions $x_j \mapsto \sum_{i=1}^{\infty} \phi_{ij} x_i$. Note
that, since each $x_i$ appears in the image of only finitely many $x_j$,
this substitution does indeed make sense on infinite
degree-$d$ series.

Since $\GL_\infty \subseteq E$, an $E$-stable subset of $P_\infty$
is also $\GL_\infty$-stable.  The converse does not hold, since for
instance $E$ also contains the zero matrix, and $P(0)f=0 \neq P(g)f$
for all nonzero $f\in P_{\infty}$ and $g \in \GL_\infty$ when the polynomial functor $P$ is pure. However, it is easy to see
that $\GL_\infty$-stable {\em closed} subsets of $P_\infty$ are also
$E$-stable. In particular, we have $\overline{\GL_{\infty}\!\cdot f}=\overline{P(E)f}$.

\subsection{A quasi-order on infinite tensors}\label{sec:quasiorder}

\begin{de}
For infinite tensors $p,q \in P_\infty$ we write $p \preceq q$ if $p \in P(E)q$. In this case, we say that $q$ {\em specialises} to $p$.
\end{de}

From the fact that $E$ is a unital monoid that acts on $P_\infty$,
we find that $\preceq$ is transitive and reflexive. Hence it induces an
equivalence relation $\simeq$ on $P_\infty$ by
\[
p \simeq q :\Leftrightarrow p \preceq q \text{ and } q \preceq p,
\]
as well as a partial order on the equivalence classes of $\simeq$.

\begin{ex}\label{ex:copiesS}
Fix an integer $k\geq1$ and consider the polynomial functor $P = (S^1)^{\oplus k}$. A tuple $q=(q_1,\ldots,q_k)\in P_{\infty}$ has a dense $\GL_{\infty}$-orbit if and only if $q_1,\ldots,q_k\in S^1_{\infty}$ are linearly independent. Suppose that $q$ has a dense $\GL_{\infty}$-orbit and let $A$ be the $\NN\times k$ matrix corresponding to $q$. Then $A$ has full rank. By acting with an element of $\GL_{\infty}\subseteq E$, we may assume that
\[
A=\binom{I_k}{B}
\]
where $B$ is again an $\NN\times k$ matrix. Now, take
\[
\phi_C:=\begin{pmatrix}
I_k\\C&I_{\infty}
\end{pmatrix}\in E
\]
and note that $\phi_{-B}A=(I_k~0)^\top$, so that $P(\phi_{-B})q=(x_1,\ldots,x_k)$. So any two tuples in $P_\infty$ with a dense $\GL_\infty$-orbit are in the same equivalence class. Moreover, the element of $E$ specialising one tuple to the other can be chosen to be invertible in $E$ as $\phi_{C}\varphi_{-C}=I_{\infty}$.
\end{ex}

There is an obvious relation between $\preceq$ and orbit closures, namely:
if $p \preceq q$, then $p \in \overline{\GL_\infty\!\cdot q}$. The converse,
however, is not true.

\begin{ex}
Let $p=x_1(x_1^2+x_2^2+\ldots),q=x_1^3+x_2^3+\ldots\in S^3_{\infty}$. Then $q$ has infinite strength and so $p\in S^3_{\infty}=\overline{\GL_{\infty}\!\cdot q}$. However, we have $p\not\preceq q$: suppose that
\[
f := x_1g(x_1,x_2,\ldots)+h(x_2,x_3,\ldots)\in S^3(E)q
\]
for some $g\in S^2_\infty$ and $h\in S^3_\infty$. As only finitely many variables $x_i$ are substituted by linear forms containing $x_1$ when specialising $q$ to $f$, we see that
\[
x_1g(x_1,x_2,\ldots)+\tilde{h}(x_2,x_3,\ldots)\in S^3(E)(x_1^3+x_2^3+\ldots+x_n^3)
\]
for some integer $n\geq 1$ and $\tilde{h}\in S^3_{\infty}$. From this, it is easy to see that $g$ has finite strength. Hence $f\neq p$ as $x_1^2+x_2^2+\ldots$ has infinite strength. So indeed $p\not\preceq q$.
\end{ex}

In order to have a tensor $p\in P_{\infty}$ with a dense $\GL_{\infty}$-orbit, the polynomial functor $P$ must be pure. For some time, we believed that when this is the case all elements $p \in P_\infty$ with a dense $\GL_\infty$-orbit might form a single $\simeq$-equivalence class. When $P$ has degree $\leq2$, this is in fact true; see Example~\ref{ex:degreeleq2}. However, it doesn't hold for cubics.

\begin{ex}\label{ex:noncomparable}
Let $p,q\in S^3_{\infty}$ be as before. Now also consider $r = p(x_1,x_3,\ldots)+q(x_2,x_4,\ldots)$. We have $q=r(0,x_1,0,x_2,\ldots)\preceq r$ and so $S^3_{\infty}=\overline{\GL_{\infty}\!\cdot q}\subseteq\overline{\GL_{\infty}\!\cdot r}$. Hence both $q$ and $r$ have dense $\GL_{\infty}$-orbits. And, we have $r\not\preceq q$: indeed, otherwise $p=r(x_1,0,x_2,0,\ldots)\preceq r\preceq q$, but $p\not\preceq q$.
\end{ex}

\subsection{Minimal classes of elements with dense orbits}

Our second main result is the following.

\begin{thm}[Main Theorem II] \label{thm:MainII}
Suppose that $K$ is algebraically closed of characteristic zero. Let
$P$ be a pure homogeneous polynomial functor over $K$. Then
there exist tensors $p,r \in P_\infty$ whose
$\GL_\infty$-orbits are dense such that $p\preceq q\preceq r$ for all $q\in P_{\infty}$ whose $\GL_\infty$-orbit is dense.
\end{thm}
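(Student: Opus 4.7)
The plan is to exhibit explicit witnesses $p$ and $r$, each as an infinite sum of ``local'' tensors placed in pairwise disjoint blocks of coordinates, and to verify extremality directly, leaning on the surjectivity results of Main Theorem~I.

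For the maximum $r$, my first move would be to invoke Corollary~\ref{cor:Surjective}: for each integer $k\geq1$, choose an $M_k$ and a tensor $u_k\in P(K^{M_k})$ such that every element of $P(K^k)$ is of the form $P(\psi)u_k$ for some $\psi\in\Hom(K^{M_k},K^k)$. Put the $u_k$ into disjoint finite blocks $B_k\subset\NN$ of size $M_k$ and set $r:=\sum_k u_k\in P_\infty$. Since each finite projection of $r$ is already hit by a single block, $\GL_\infty\cdot r$ is dense. To show $q\preceq r$ for an arbitrary dense-orbit $q$, I would construct $\phi\in E$ stage by stage: at stage $k$, define $\phi$ on the columns $B_k$ so that, together with previous stages, the resulting projection of $P(\phi)r$ to $P(K^{c_k})$ matches $q^{(c_k)}$, for a carefully chosen increasing sequence $c_k$. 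Row-finiteness of $\phi$ is enforced by a sliding-window discipline: the rows of $\phi$ touched at stage $k$ are constrained to a moving interval $[r_k,c_k]$ with $r_k\to\infty$, so every output row $i$ is written into by only finitely many stages. The local universality of each $u_k$ is what allows the prescribed change to the projection to be realized at each stage.

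For the minimum $p$, I would take $p:=\sum_k v_k$ where each $v_k\in P(K^{N_k})$ is a strength-one tensor---the image of a generic pair under a bilinear polynomial transformation $Q\oplus R\to P$ with $Q,R$ irreducible of positive degrees summing to $\deg P$, decomposed across the isotypic components if $P$ is reducible---placed in disjoint blocks. An infinite sum of independent strength-one pieces in disjoint variables has infinite strength, so by the equivalence ``dense orbit $\Leftrightarrow$ infinite strength'' in \S2.6 the orbit $\GL_\infty\cdot p$ is dense. To show $p\preceq q$ for any dense-orbit $q$, I would partition $\NN$ (the variable indices of $q$) into infinite subsets $W_1,W_2,\ldots$ such that every restriction $q|_{W_k}$ (obtained by setting $y_j\mapsto 0$ for $j\notin W_k$) still has infinite strength; Corollary~\ref{cor:Surjective} applied inside each $W_k$ then produces a finite $W_k'\subset W_k$ and a linear map $\psi_k\colon K^{W_k'}\to K^{N_k}$ with $P(\psi_k)(q|_{W_k'})=v_k$. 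Assembling the $\psi_k$ into one $\NN\times\NN$ substitution (sending leftover columns to $0$) yields a row-finite $\phi\in E$---each output row lies in a single block $B_k$ and is written into only by the finite set of columns $W_k'$---and $P(\phi)q=p$.

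The key technical obstacle is the combinatorial partition lemma underlying the minimum: given an infinite-strength $q\in P_\infty$, one must partition $\NN$ into infinitely many infinite subsets $W_k$ so that each $q|_{W_k}$ also has infinite strength. Naive partitions can collapse strength---e.g., splitting $x_1x_2x_3+x_4x_5x_6+\cdots$ by parity of the index kills everything---so an adaptive inductive construction is needed, probably modeled on the proof of Main Theorem~I, extracting at each step a sub-block of variables supporting an infinite-strength restriction while leaving an infinite residual on which the tensor still has infinite strength. The analogous hurdle for the maximum is coordinating the sliding-window construction: one must verify that, at each stage, the universal block $u_k$ really does admit a substitution inside the current window realizing the prescribed increment of $q$, under the tight constraints imposed by row-finiteness.
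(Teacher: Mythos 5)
Your maximal construction is fundamentally flawed, and the sliding-window idea cannot be made to work. With $r=\sum_k u_k$ in disjoint blocks $B_k$ and the discipline that columns $B_k$ are supported in rows $[r_k,c_k]$ with $r_k\to\infty$, the output $P(\phi)r=\sum_k P(\phi|_{B_k})u_k$ is a sum whose $k$-th summand is supported on $[r_k,c_k]$. In particular $x_1$ appears in only finitely many summands and hence only in monomials with indices confined to some bounded range $[1,C]$. But $q=\sum_{1\le i<j}x_ix_j\in S^2_\infty$ has a dense orbit (since $-2q=\sum_ix_i^2-(\sum_ix_i)^2$ and $\sum_ix_i^2$ has infinite strength), yet every projection $q_n$ contains $x_1x_n$, so $x_1$ pairs with arbitrarily large indices and $q\not\preceq r$ for any $r$ specialised through a window-structured $\phi$. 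Dropping the window discipline destroys your argument for row-finiteness, so you are stuck on both horns. The paper resolves this with a genuinely different device: a recursion on the degree, building a maximal $r_d\in T^d_\infty$ from a maximal $r_{d-1}\in T^{d-1}_\infty$, and writing an arbitrary $p\in T^d_\infty$ in the triangular normal form $p=\sum_{i,j}x_i\otimes_j p_{ij}(x_i,x_{i+1},\ldots)$ with each $p_{ij}$ supported on the \emph{semi-infinite} tail $\{i,i+1,\ldots\}$. It is exactly the unboundedness of these local pieces (which absorb the cross terms), combined with the triangularity and the inductive hypothesis on $r_{d-1}$, that yields row-finiteness with no window constraint.

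Your minimal construction is closer in spirit to the paper's, but the minimality argument has two gaps. The first you flag yourself: the partition lemma---splitting $\NN$ into infinitely many infinite subsets $W_k$ so that each $q|_{W_k}$ keeps infinite strength---is asserted but not proved, and the paper never proves any such statement. The second you do not address: even granting the partition and a block-diagonal $\phi$ sending $W_k'$-columns to $B_k$-rows with $P(\phi_k)(q|_{W_k'})=v_k$, the element $q|_{W_1'\cup\cdots\cup W_k'}$ also has components of mixed multi-degree across the blocks $W_j'$; under $P(\phi)$ these map to nonzero mixed-degree components of $P(K^{B_1\cup\cdots\cup B_k})$, so the output is not $v_1+\cdots+v_k$ unless the cross terms are explicitly annihilated, which nothing in your construction does. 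Proposition~\ref{prop:Specialising} handles both issues at once via an iterative argument that needs no partition of $\NN$: at step $i$ one carves a finite window out of $q$'s variables, uses Corollary~\ref{cor:degen+finstrength} to see the remaining tail $p'$ still has a dense orbit, and then uses density of the \emph{pair} $(r,p')$---the lower-degree cross term together with the top-degree tail---to find one substitution that simultaneously sends the cross term to zero and the tail to the prescribed block of the target.
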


The elements $p$ that have this property form a single $\simeq$-class
which lies below the $\simeq$-classes of all other $q\in P_{\infty}$
whose $\GL_\infty$-orbit is dense.  For the construction of such a tensor
$p\in P_{\infty}$, see \S\ref{ssec:minimal_class}. For the construction
of the tensor $r \in P_\infty$, see \S\ref{ssec:maximal_class}.

\begin{re} \label{re:Char}
In both our Main Theorems, we require that the characteristic
be zero. This is because the results in \cite{B:thesis} and
\cite{BDES:geometrypolyrep} require this. However, the proof of
topological Noetherianity for polynomial functors in \cite{D:topnoeth}
does not require characteristic zero, and shows that after a shift
and a localisation, a closed subset of a polynomial functor admits a
homeomorphism into an open subset of a smaller polynomial functor. In
characteristic zero, this is in fact a closed embedding, so that it
can be inverted and yields a parameterisation of (part of) the closed
subset. In positive characteristic, it is not a closed embedding, but
the map still becomes invertible if one formally inverts the Frobenius
morphism; this is touched upon in \cite{BDES:geometrypolyrep}. This
might imply variants of our Main Theorems in arbitrary characteristic,
but we have not yet pursued this direction in detail.
\end{re}

\section{Proof of Main Theorem I} \label{sec:ProofI}

\subsection{The linear approximation of a polynomial functor}
\label{ssec:Derivative}

Let $P$ be a polynomial functor over an infinite field and let $U,V
\in \Vec$. Then $P(U \oplus V) = \bigoplus_{d,e=0}^{\infty}
Q_{d,e}(U,V)$ where
\[
Q_{d,e}(U,V):=\{v \in P(U \oplus V) \mid \forall s,t \in K: \ P(s\id_U \oplus\, t\id_V)v = s^d t^e v \}\subseteq P_{d+e}(U\oplus V).
\]
The terms with $e=0$ add up to $P(U)$, and the terms with $e=1$ add up
to a polynomial bifunctor evaluated at $(U,V)$ that is linear in $V$.
This is necessarily of the form $P'(U) \otimes V$, where $P'$ is a
polynomial functor. In other words, we have
\[ P(U \oplus V)=P(U) \oplus (P'(U) \otimes V) \oplus \text{
higher-degree terms in }V. \]
We informally think of the first two terms as the linear approximation
of $P$ around $U$.  Now suppose that we have a
short exact sequence
\[ 0 \to P \to Q \to R \to 0 \]
of polynomial functors. This implies that for all $U,V$ we have a short
exact sequence
\[ \{0\} \to P(U \oplus V) \to Q(U \oplus V) \to R(U \oplus V) \to \{0\} \]
and inspecting the degree-$1$ parts in $V$ we find a short
exact sequence
\[ 0 \to P' \to Q' \to R' \to 0. \]
This, and further straightforward computations, shows that $P \mapsto P'$
is an exact functor from the category of polynomial functors to itself.

\begin{re}
For $U\in\Vec$ fixed, denote the polynomial functor sending $V\mapsto P(U\oplus V)$ and $\phi\mapsto P(\id_U\oplus \phi)$ by $\Sh_U(P)$. Then we have
\[
\Sh_U(P)_e(V)=\{v \in P(U \oplus V) \mid \forall t \in K: \ P(\id_U \oplus\, t\id_V)v = t^e v \}
\]
and from this we see that $Q_{d,e}(U,V)=\Sh_U(P)_e(V)\cap P_{d+e}(U\oplus V)$. In particular, when $P$ is homogeneous of degree $d$, we see that $P(U\oplus V)=\bigoplus_{e=0}^d Q_{d-e,e}(U,V)$ where $Q_{d-e,e}(U,V)=\Sh_U(P)_e(V)$. Also note that, in this case, $\Sh_U(P)_0(V)=P(U)$ and $\Sh_U(P)_d(V)=P(V)$ via the inclusions of $U,V$ into $U\oplus V$.
\end{re}

\begin{ex}
If $P=S^d$, then the formula
\[
S^d(U \oplus V) \cong \bigoplus_{e=0}^d S^{d-e}(U) \otimes S^e(V)
=S^d (U) \oplus (S^{d-1} (U) \otimes V) \oplus\cdots
\]
identifies $P'$ with $S^{d-1}$.
\end{ex}

\begin{ex} \label{ex:CharP}
Let $K$ be an algebraically closed field of characteristic $p$. Then
$S^p$ contains the subfunctor $P(V):=\{v^p \mid v \in V\}$.  We have
$P(U \oplus V)=P(U) \oplus P(V)$, and hence $P'=0$.
\end{ex}

\subsection{Proof of Main Theorem I}

In this subsection we prove Theorem~\ref{thm:MainI}.  We start with a
result of independent interest.

\begin{thm} \label{thm:DenseImpliesEqual}
Let $P$ be a pure polynomial functor over an algebraically
closed field $K$of characteristic $0$ or $> \deg(P)$
and let $X$ be a subset of $P$ such that $X(V)$ is dense in $P(V)$
for all $V \in \Vec$. Then, in fact, $X(V)$ is {\em equal} to $P(V)$
for all $V \in \Vec$.
\end{thm}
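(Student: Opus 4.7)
My plan is to prove the theorem by induction on $P$ in the $\lessdot$-ordering on polynomial functors, using the closed case of Main Theorem~I (\cite[Theorem~4.2.5]{B:thesis}) as the key input at each step. The base case $P=0$ is vacuous. For the inductive step, first reduce to showing $X(V_0)=P(V_0)$ for a single $V_0$ of sufficiently large dimension: for any $V$ embedded in $V_0$ with retraction $\pi\colon V_0\to V$, the map $P(\pi)\colon P(V_0)\to P(V)$ is surjective (since $P(\pi)\circ P(\iota)=\id$) and sends $X(V_0)$ into $X(V)$, so $X(V_0)=P(V_0)$ forces $X(V)=P(V)$. In particular, one is free to enlarge~$V$.

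Next, consider the assignment $D(V):=\{f\in P(V):\text{the map }\Hom(V,V_0)\to P(V_0),\,\phi\mapsto P(\phi)(f)\text{ has image not Zariski-dense}\}$. This is closed in $P$ (by upper semicontinuity of image dimension), functorial (the image for $P(\psi)(f)$ is contained in that for $f$), and proper ($D\neq P$ is witnessed by an explicit dominant element for $V$ large). By the closed Main Theorem~I, $D\subseteq \bigcup_i \im(\alpha_i)$ for some $\alpha_i\colon Q_i\to P$ with $Q_i\lessdot P$; for $V$ of sufficiently large dimension this is a proper closed subset of $P(V)$ by a dimension count on top-degree pieces. Density of $X(V)$ then provides $f\in X(V)\setminus D(V)$, and the image $\{P(\phi)(f)\}\subseteq X(V_0)$ is a dense constructible subset of $P(V_0)$, hence contains a nonempty Zariski open $\Omega\subseteq X(V_0)$.

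To upgrade $X(V_0)\supseteq\Omega$ to $X(V_0)=P(V_0)$, observe that $P(V_0)\setminus\Omega$ is a proper closed subset, so (another application of closed Main Theorem~I, to the closed functorial subset cut out by $P\setminus\Omega$) it lies inside $\bigcup_j \im(\beta_{j,V_0})$ for polynomial transformations $\beta_j\colon R_j\to P$ with $R_j\lessdot P$. Pulling $X$ back along each $\beta_j$ gives a functorial subset $X^{(j)}(W):=\beta_{j,W}^{-1}(X(W))\subseteq R_j(W)$ of the strictly smaller polynomial functor $R_j$; provided $X^{(j)}$ is dense in $R_j$, the induction hypothesis yields $X^{(j)}=R_j$, which forces $\im(\beta_j)\subseteq X$ and in particular $P(V_0)\setminus\Omega\subseteq X(V_0)$, completing the step.

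The main obstacle is precisely the density of $X^{(j)}$ in $R_j$: this does not follow automatically from density of $X$ in $P$, since $\beta_j$ need not be dominant on $P$, and a dense subset of $P$ need not pull back to a dense subset of $R_j$. I expect this to require the shift-and-localize machinery of \cite{D:topnoeth,BDES:geometrypolyrep}, which after an appropriate shift and localization identifies closed subsets of~$P$ with closed subsets of smaller polynomial functors and permits density to be transported across the correspondence; alternatively, one may need an iterated application of the closed Main Theorem~I that progressively shrinks the residual locus where $X$ might fail to be full, using topological Noetherianity of $P_\infty$ to guarantee termination.
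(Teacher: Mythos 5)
Your proposal is a genuinely different approach from the paper's, but it has two serious gaps, only one of which you flag yourself. The paper does \emph{not} induct on $\lessdot$ and does not use the closed version of Main Theorem~I at all in the proof of Theorem~\ref{thm:DenseImpliesEqual}. Instead, for a fixed target $q\in P(K^n)$ it studies the incidence variety $Z_k=\{(\phi,r)\in\Hom(K^k,K^n)\times P(K^k):\rk\phi=n,\ P(\phi)r=q\}$, which is a vector bundle over the rank-$n$ locus, hence irreducible of known dimension. It then shows the projection $Z_k\to P(K^k)$ is dominant for $k\gg n$ by exhibiting a single point $z=(\pi_U,\ P(\iota_U)q+r)$ with $r\in P'(U)\otimes V$ at which the fibre has the expected dimension; the tangent-space computation there reduces to surjectivity of a linear map $\Omega_{P,V,r}$ built from the derivative functor $P'$, proved in a separate lemma. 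Dominance plus Chevalley plus density of $X(K^k)$ then yields $q\in X(K^n)$. This is direct and needs no induction, no pullback of $X$ along transformations, and no appeal to \cite[Theorem~4.2.5]{B:thesis}.

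Your argument has a gap precisely where you say: density of $X^{(j)}(W)=\beta_{j,W}^{-1}(X(W))$ in $R_j(W)$ does not follow from density of $X(W)$ in $P(W)$. Preimages of dense sets under non-dominant polynomial maps can easily be empty (e.g.\ $\beta\colon\mathbb{A}^2\to\mathbb{A}^2$, $(x,y)\mapsto(x,0)$, and $X=\mathbb{A}^2$ minus the $x$-axis), and the $\beta_j$ produced by the closed Main Theorem~I are generally very far from dominant, since $R_j\lessdot P$. You gesture at shift-and-localize or iterated Noetherianity, but no concrete mechanism is given, and it is not clear any such mechanism exists along these lines. There is also an earlier unflagged gap: you want to cover the single proper closed subset $P(V_0)\setminus\Omega$ of $P(V_0)$ by images $\bigcup_j\im(\beta_{j,V_0})$ by ``applying closed Main Theorem~I to the closed functorial subset cut out by $P\setminus\Omega$'', but the smallest closed functorial subset whose $V_0$-value contains $P(V_0)\setminus\Omega$ is typically all of $P$ (already for $P=S^2$ and $V_0=K^2$, the $\GL(V_0)$-saturation of any closed set containing a rank-$2$ quadric is dense). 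The closed Main Theorem~I applies to closed \emph{functorial} subsets, not to arbitrary proper closed subsets of a single $P(V_0)$, so this covering is not available. The paper's incidence-variety argument sidesteps both problems entirely; I would recommend abandoning the inductive route and instead proving dominance of $Z_k\to P(K^k)$ directly via the linear approximation $P'$.
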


Example~\ref{ex:PosDef} shows that the condition that $K$ be algebraically
closed cannot be dropped. We do not know if the condition on the
characteristic of $K$ can be dropped, but the proof will use that
the polynomial functor $P'$ introduced in \S\ref{ssec:Derivative} is
sufficiently large, which, by Example~\ref{ex:CharP}, need not be the
case when $\cha K$ is too small.

\begin{proof}
Let $q \in P(K^n)$. For each $k \geq n$, we consider the incidence variety
\[
Z_k:=\{(\phi,r) \in \Hom(K^k,K^n) \times P(K^k)
\mid \rk(\phi)=n \text{ and } P(\phi)r=q\}.
\]
We write $e_k:=\dim_K P(K^k)$. Since for every $\phi \in
\Hom(K^k,K^n)$ of rank $n$ the linear map $P(\phi)$ is
surjective, $Z_k$ is a vector
bundle of rank $e_k-e_n$ over the rank-$n$ locus in $\Hom(K^k,K^n)$.
Hence $Z_k$ is an irreducible variety with $\dim Z_k=kn + e_k-e_n$. We
therefore expect the projection $\Pi\colon Z_k \to P(K^k)$ to be dominant for
$k \gg n$. To prove that this is indeed the case, we need to show that for
$z \in Z_k$ sufficiently general, the local dimension at $z$ of the fibre
$\Pi^{-1}(\Pi(z))$ is (at most) $\dim(Z_k)-e_k=kn-e_n$. By the upper semicontinuity
of the fibre dimension \cite[Theorem 11.12]{H:ag}, it suffices to exhibit a single point $z$ with this
property, and indeed, it suffices to show that the tangent space to the
fibre at $z$ has dimension (at most) $kn-e_n$.

To find such a point $z$, set $U:=K^n$ and $V:=K^{k-n}$ and consider
\[
z:=(\pi_U,P(\iota_U)q + r) \in Z_k,
\]
where $\pi_U\colon U \oplus V \to U$ is the projection and $\iota_U\colon U \to
U \oplus V$ is the inclusion and where we will choose $r \in P'(U)
\otimes V \subseteq P(U \oplus V)$. Note that then
\[
P(\iota_U)q + r \in P(U) \oplus (P'(U) \otimes V) \subseteq
P(U \oplus V)
\]
and that $P(\pi_U)r=0$ so that $z$ does, indeed, lie in $Z_k$.

The tangent space $T_z \Pi^{-1}(\Pi(z))$ (projected into $\Hom(K^k,K^n)$)
is contained in the solution space of the linear system of equations
\[
P(\pi_U + \epsilon \psi)(P(\iota_U) q + r) = q \mod \epsilon^2
\]
for $\psi$. By the rank theorem, the dimension of this
solution space equals $kn=\dim(\Hom(K^k,K^n))$ minus the rank of the linear map
\[
\Hom(U \oplus V,U) \to P(U), \psi \mapsto \text{ the
coefficient of $\epsilon$ in }
P(\pi_U + \epsilon \psi)(P(\iota_U) q + r).
\]
So it suffices to prove that for all $k \gg n$ there is a suitable $r$ such that this linear map is surjective. In fact, we will restrict the domain to those $\psi
\in \Hom(U \oplus V,U)$ of the form $\omega\circ \pi_V$ where $\pi_V\colon U \oplus V \to V$ is the projection and $\omega\in\Hom(V,U)$. Then
\[
P(\pi_U + \epsilon \psi)(P(\iota_U) q) =P((\pi_U + \epsilon \omega\circ \pi_V)\circ\iota_U) q=P(\id_U)q=q
\]
So $P(\iota_U)q$ does not contribute to the coefficient of $\epsilon$ and this coefficient equals
\[
P(\id_U + \id_U) (\id_{P'(U)} \otimes \,\omega) r
\]
where $\id_U + \id_U\colon U \oplus U \to U$ is the map sending $(u_1,u_2)$
to $u_1 + u_2$. Note that the codomain of $\id_{P'(U)} \otimes \,\omega$ equals
$P'(U) \otimes U \subseteq P(U \oplus U)$, so that the composition above
makes sense. Below we will show that for $k-n = \dim V \gg n$ and suitable
$r \in P'(U) \otimes V$ the linear map
\begin{align*}
\Omega_{P,V,r}\colon\Hom(V,U) &\to P(U)\\
\omega &\mapsto P(\id_U + \id_U)(\id_{P'(U)} \otimes \,\omega)r
\end{align*}
is surjective.

Hence there exists a $k$ such that $Z_k \to P(K^k)$ is dominant. By
Chevalley's theorem, the image contains a dense open subset of $P(K^k)$,
and this dense open subset intersects the dense set $X(K^k)$. Hence
there exists an element $p \in X(K^k)$ and a $\phi \in \Hom(K^k, K^n)$
such that $P(\phi)p=q$. Finally, since $X$ is a subset of $P$, also $q$
is a point in $X(K^n)$. Hence $X(K^n)=P(K^n)$ for each $n$, as desired.
\end{proof}

\begin{lm}
Let $P$ be a polynomial functor over an infinite field $K$ with
$\cha(K)=0$ or $\cha(K) > \deg(P)$ and let $U \in \Vec$. Then for $V \in \Vec$
with $\dim V \gg \dim U$, there exists an $r \in P'(U) \otimes V$ such that
\begin{align*}
\Omega_{P,V,r}\colon\Hom(V,U) &\to P(U)\\
\omega &\mapsto P(\id_U + \id_U)(\id_{P'(U)} \otimes \,\omega)r
\end{align*}
is surjective.
\end{lm}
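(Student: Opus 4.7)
The plan is to factor $\Omega_{P,V,r}$ through the canonical ``multiplication'' map
\[
\mu\colon P'(U) \otimes U \to P(U),
\]
defined as the restriction of $P(\id_U + \id_U)\colon P(U \oplus U) \to P(U)$ to the summand $P'(U) \otimes U$ of $P(U \oplus U)$. Granting that $\mu$ is surjective, I would pick a basis $p_1,\ldots,p_m$ of $P'(U)$ (so $m = \dim P'(U)$), choose linearly independent $v_1,\ldots,v_m \in V$ (which exist once $\dim V \geq m$, in particular for $\dim V \gg \dim U$), and set $r := \sum_{i=1}^m p_i \otimes v_i$. For any $(u_1,\ldots,u_m) \in U^m$ there is some $\omega\in\Hom(V,U)$ with $\omega(v_i) = u_i$, so $(\id_{P'(U)}\otimes\omega)\,r$ sweeps out all of $P'(U)\otimes U$ as $\omega$ varies, and applying $\mu$ therefore covers all of $P(U)$. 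Note that the argument tacitly requires $P$ to have no degree-zero part, which is the setting in which the lemma will be applied.

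The main task is thus the surjectivity of $\mu$, which I would prove by polarisation. Using additivity in $P$, reduce to the case where $P$ is homogeneous of some degree $d\geq1$; then $P'(U)\otimes U = Q_{d-1,1}(U,U)$. Writing $\Delta\colon U\to U\oplus U$ for the diagonal and $\pi_1,\pi_2\colon U\oplus U\to U$ for the two projections, the identity $(\pi_1 + \epsilon\pi_2)\circ\Delta = (1+\epsilon)\id_U$ yields, for every $p\in P(U)$,
\[
P(\pi_1 + \epsilon\pi_2)\,P(\Delta)\,p \;=\; P\bigl((1+\epsilon)\id_U\bigr)\,p \;=\; (1+\epsilon)^d\,p.
\]
On the other hand, the factorisation $s\pi_1 + t\pi_2 = (\id_U+\id_U)\circ(s\id_U\oplus t\id_U)$ together with the bigrading of $P(U\oplus U)$ shows that $P(\pi_1 + \epsilon\pi_2)$ acts by $\epsilon^e \cdot P(\id_U+\id_U)|_{Q_{d-e,e}(U,U)}$ on each component $Q_{d-e,e}(U,U)$. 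Expanding both sides in $\epsilon$ and extracting the coefficient of $\epsilon^1$ gives $d\cdot p = \mu\bigl((P(\Delta)p)_{d-1,1}\bigr)$, where $(\cdot)_{d-1,1}$ denotes the component in $Q_{d-1,1}(U,U)$. The hypothesis $\cha K = 0$ or $\cha K > \deg P$ ensures that $d$ is invertible in $K$, so $p \in \im(\mu)$, which is the required surjectivity.

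The main obstacle is the clean setup of the polarisation identity, in particular verifying that $P(s\pi_1 + t\pi_2)$ really does act by $s^{d-e}t^e$ on $Q_{d-e,e}(U,U)$, and confirming that the characteristic restriction is genuinely necessary rather than merely convenient. Example~\ref{ex:CharP} shows the latter: in characteristic $p$, the Frobenius subfunctor $P(V)=\{v^p : v\in V\}$ of $S^p$ has $P' = 0$, so $\mu$ is the zero map while $P(U)\neq 0$, and no $r$ can make $\Omega_{P,V,r}$ surjective. Once $\mu$ is known to be surjective, the rest of the proof is routine linear algebra, and the only required bound on $V$ is $\dim V \geq \dim P'(U)$, which is polynomial in $\dim U$.
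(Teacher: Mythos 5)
Your proof is correct, but it takes a genuinely different route from the paper's. You isolate the canonical map $\mu\colon P'(U)\otimes U\to P(U)$ given by restricting $P(\id_U+\id_U)$, observe that for $r$ corresponding to a ``full-rank'' tensor $\sum_i p_i\otimes v_i$ the problem reduces to surjectivity of $\mu$ (so that $\dim V\geq\dim P'(U)$ already suffices), split $P$ into its homogeneous summands (a decomposition that needs no semisimplicity), and then prove surjectivity of $\mu$ by polarisation: extracting the $\epsilon^1$-coefficient of $P(\pi_1+\epsilon\pi_2)P(\Delta)p=(1+\epsilon)^d p$ gives $\mu\bigl((P(\Delta)p)_{d-1,1}\bigr)=d\cdot p$, which is exactly where the invertibility of $d$ enters. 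The paper instead invokes semisimplicity of the category of polynomial functors of degree $\leq\deg P$ to reduce from irreducible $P$ to $T\colon V\mapsto V^{\otimes d}$, and then writes down an explicit, characteristic-free $r$ for $T$. So the two proofs localise the characteristic hypothesis differently: in the paper it powers the reduction to $T$ (via semisimplicity, with citations to Green and James needed in positive characteristic), while in yours it appears only at the very last step as the invertibility of a binomial coefficient. Your argument is more elementary, avoids Schur-functor theory altogether, and makes explicit the intrinsic map $\mu$ whose surjectivity is really what is at stake; the paper's has the virtue of a very concrete witness $r$ for the universal case $T$. The one caveat you already flagged yourself: the argument requires $P$ to have no degree-zero part — but the paper's proof has the same implicit restriction (its explicit construction only makes sense for $d\geq 1$), and purity holds in the only place the lemma is used.
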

\begin{proof}
When $\cha(K)=0$, the Abelian category of polynomial functors is semisimple, with the Schur functors as a basis. When $\cha(K)=p>0$, the situation is more complicated. The irreducible polynomial functors still correspond to partitions \cite[Theorem 3.5]{G:polyreps}. A degree-$d$ irreducible polynomial functor is a submodule of the functor $T(V)=V^{\otimes d}$ if and only if the corresponding partition is column $p$-regular \cite[Theorem 3.2]{J:decompprime}. 
 Luckily, this is always the case when $d<p$. And, the Abelian category of polynomial functors of degree $<p$ is semisimple~\cite[Corollary 2.6e]{G:polyreps}. Now, if
$P,Q$ are such polynomial functors and $r_1 \in P'(U) \otimes
V$ and $r_2 \in Q'(U) \otimes W$ have the required property for
$P,Q$, respectively, then
\begin{align*}
r:=(r_1,r_2) \in (P'(U) \otimes V) \oplus (Q'(U)
\otimes W) &\subseteq (P'(U) \oplus Q'(U)) \otimes (V
\oplus W)\\ & = (P\oplus Q)'(U) \otimes (V \oplus W)
\end{align*}
has the required property for $P\oplus Q$. Hence it suffices to prove the lemma
in the case where $P$ is an irreducible polynomial functor of degree $d$.
We then have $T=P \oplus Q$, where $T(V)=V^{\otimes d}$ and $Q$ is another
polynomial functor. By a similar argument as above, if $r \in T'(U)
\otimes V$ has the required property for $T$, then its image in $P'(U)
\otimes V$ has the required property for $P$. Hence it suffices to prove
the lemma for $T$.

Now  we have
\begin{align*} T(U \oplus V)=T(U) &\oplus
(V \otimes U \otimes U \otimes \cdots \otimes U)  \oplus
(U \otimes V \otimes U \otimes \cdots \otimes U) \\ &  \oplus
\cdots
\oplus
(U \otimes U \otimes U \otimes \cdots \otimes V) \oplus
\text{ terms of higher degree in $V$,}
\end{align*}
so that $T'$ is a direct sum of $d$ copies of $U \mapsto
U^{\otimes d-1}$. We
take $r$ in the first of these copies, as follows. Let $e_1,\ldots,e_n$
be a basis of $U$ and set
\[ r:=\sum_{\alpha \in [n]^{d-1}}
v_{\alpha} \otimes e_{\alpha_1} \otimes \cdots \otimes
e_{\alpha_{d-1}} \]
where the $v_\alpha$ are a basis of a space $V$ of dimension
$n^{d-1}$. For every $\beta \in [n]^{d-1}$ and $i \in [n]$, the linear map
$\omega$ that maps $v_{\beta}$ to $e_i$ and all other
$v_\alpha$ to zero is a witness to the fact that $e_i
\otimes e_{\beta_1} \otimes \cdots \otimes e_{\beta_{d-1}}$
is in the image of $\Omega_{T,V,r}$. Hence this linear map
is surjective.
\end{proof}

\begin{lm}
Assume that $K$ is algebraically closed of characteristic zero. Let $P,Q$ be polynomial functors. Assume that $P$ is irreducible of degree $d$, $Q$ has degree $<d$ and let $\alpha\colon Q \to P$ be a polynomial
transformation, then there is a uniform bound on the strength of elements
of $\im(\alpha_V)$ that is independent of $V$.
\end{lm}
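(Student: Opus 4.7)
The plan is to decompose the polynomial transformation $\alpha$ into homogeneous pieces in its argument, polarise each piece to obtain a linear natural transformation out of a tensor power of $Q$, and then factor each such linear transformation through bilinear maps between pairs of irreducible Schur subfunctors.

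First I would write $\alpha=\sum_{k\geq 0}\alpha^{(k)}$, where $\alpha^{(k)}_V\colon Q(V)\to P(V)$ is the part of $\alpha_V$ homogeneous of degree $k$ in the argument (obtained from the Taylor expansion of $\alpha_V$ at $0$, and itself a polynomial transformation). Naturality with respect to the scalar maps $t\cdot\id_V$, combined with the homogeneity of $P$ in degree $d$, forces $\alpha^{(0)}=0$ and, more generally, forces $\alpha^{(k)}$ to vanish unless the $Q$-functor-degrees of its $k$ tensor arguments sum to $d$. Since $\deg Q<d$, every positive-degree irreducible summand $R_j$ of $Q$ satisfies $1\leq\deg R_j<d$, so the nonzero $\alpha^{(k)}$ only occur for $2\leq k\leq d$; the constant summand of $Q$, if any, can be absorbed, since constant arguments only produce scalar multiples of the positive-degree contributions treated below. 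Thus $\alpha$ is a finite sum of homogeneous pieces $\alpha^{(k)}$.

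Next, for each such $k$ I would invoke polarisation (valid in characteristic zero) to write $\alpha^{(k)}_V(q)=\tilde\alpha^{(k)}_V(q,\ldots,q)$ for a unique symmetric $k$-multilinear natural transformation $\tilde\alpha^{(k)}\colon Q^{\oplus k}\to P$, equivalently a linear natural transformation $Q^{\otimes k}\to P$. Decomposing $Q=\bigoplus_{j=1}^{m}R_j$ into its finitely many irreducible Schur subfunctors and writing $q=\sum_j q_j$ with $q_j\in R_j(V)$, I expand
\[
\alpha^{(k)}_V(q)=\sum_{(j_1,\ldots,j_k)\in[m]^k}\tilde\alpha^{(k)}_V(q_{j_1},\ldots,q_{j_k}),
\]
a finite sum in which only tuples with $\deg R_{j_1}+\cdots+\deg R_{j_k}=d$ contribute.

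For each such contributing tuple I would then view the restriction of $\tilde\alpha^{(k)}$ to $R_{j_1}\otimes R_{j_2}\otimes\cdots\otimes R_{j_k}$ as a bilinear transformation from $R_{j_1}\oplus(R_{j_2}\otimes\cdots\otimes R_{j_k})$ into $P$, decompose $R_{j_2}\otimes\cdots\otimes R_{j_k}=\bigoplus_\ell T_\ell$ into irreducible summands (each of strictly positive degree $d-\deg R_{j_1}>0$, using the hypothesis $\deg Q<d$), and project $q_{j_2}\otimes\cdots\otimes q_{j_k}$ onto each $T_\ell(V)$. This expresses $\alpha_V(q)$ as a finite sum of terms $\gamma_\ell(q_{j_1},s_\ell)$, where $\gamma_\ell\colon R_{j_1}\oplus T_\ell\to P$ is a bilinear polynomial transformation between irreducibles of positive degrees summing to $d$, precisely in the form required by Definition~\ref{de:Strength2}. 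The total number of such terms is bounded by a constant $N$ depending only on the irreducible decomposition of $Q$ and the decompositions of the relevant tensor products---hence only on $\alpha$, not on $V$ or $q$---so $\str(\alpha_V(q))\leq N$ uniformly. The main obstacle is really only bookkeeping: ensuring the degree constraints align so that the second factor $T_\ell$ has strictly positive degree (which is exactly where $\deg Q<d$ is used) and that the count of bilinear-map evaluations stays uniform in $V$.
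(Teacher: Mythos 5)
Your proof is correct and follows essentially the same route as the paper's: strip off the constant part of $Q$, linearise $\alpha$ via polarisation, decompose the positive-degree part of $Q$ into irreducible Schur summands, and count the resulting bilinear-map evaluations to obtain a bound depending only on these decompositions and not on $V$. The only stylistic difference is that the paper factors $\alpha$ through the universal polynomial transformation $(r_1,\ldots,r_k)\mapsto(r_1^{\otimes e_1}\otimes\cdots\otimes r_k^{\otimes e_k})_{e}$ into $F:=\bigoplus\bigotimes_i S^{e_i}R^{(i)}$ followed by a linear map $\gamma\colon F\to P$, whereas you polarise each homogeneous piece $\alpha^{(k)}$ directly to a linear transformation $Q^{\otimes k}\to P$ and, at the end, decompose $R_{j_2}\otimes\cdots\otimes R_{j_k}$ into irreducible summands $T_\ell$---a step that spells out the bookkeeping needed to meet the irreducibility requirement of Definition~\ref{de:Strength2}, which the paper's concluding estimate treats more tersely.
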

\begin{proof}
Let $R$ be the sum of the components of $Q$ of strictly positive degree. Any
element in $\im(\alpha_V)$ is also in $\im(\beta_{V})$ for a
polynomial transformation $\beta_V\colon R\to P$ obtained
from $\alpha$ by a suitable specialisation. Write $R=R^{(1)} \oplus\cdots \oplus R^{(k)}$, where the $R^{(i)}$ are Schur functors of degrees $0<d_i<d$. The
polynomial transformation $\beta$ factors uniquely as the polynomial transformation
\begin{align*}
\delta\colon R^{(1)} \oplus\cdots \oplus R^{(k)}&\to F:=\bigoplus_{\substack{e_1,\ldots,e_k\geq0\\\sum_i e_i d_i=d}} \bigotimes_{i=1}^k S^{e_i} R^{(i)} \\
(r_1,\ldots,r_k)&\mapsto(r_1^{\otimes e_1}\otimes\cdots\otimes r_k^{\otimes e_k})_{e_1,\ldots,e_k}
\end{align*}
and a {\em linear} polynomial transformation $\gamma\colon F \to P$. As $\gamma$ is linear, we see that $\str(\gamma_V(v))\leq \str(v)$ for all $V\in\Vec$ and $v\in F(V)$. So it suffices to prove that the elements of the subset $\im(\delta)$, which depends only on $Q$ and $d$, have bounded strength. We have
\[
\str(r_1^{\otimes e_1}\otimes\cdots\otimes r_k^{\otimes e_k})_{e_1,\ldots,e_k}\leq \sum_{\substack{e_1,\ldots,e_k\geq0\\\sum_i e_i d_i=d}}\str(r_1^{\otimes e_1}\otimes\cdots\otimes r_k^{\otimes e_k})\leq\sum_{\substack{e_1,\ldots,e_k\geq0\\\sum_i e_i d_i=d}}1
\]
as $\sum_i e_i\geq 2$ whenever $\sum_i e_id_i=d$. So this is indeed the case.
\end{proof}

\begin{proof}[Proof of Theorem~\ref{thm:MainI} (Main Theorem I)]
Let $X$ be a subset of a pure polynomial functor $P$ over an algebraically
closed field $K$ of characteristic zero. For each
$V \in \Vec$ define $Y(V):=\overline{X(V)}$. If $Y$ is a proper closed
subset of $P$, then by \cite[Theorem 4.2.5]{B:thesis} there exist finitely many polynomial
transformations $\alpha_i\colon Q_i \to P$ with $Q_i\lessdot P$
and $Y(V) \subseteq \bigcup_i \im(\alpha_{i,V})$ for all $V\in\Vec$.
Since $X \subseteq Y$, we are done. Otherwise, if $Y(V)=P(V)$
for all $V$, then Theorem~\ref{thm:DenseImpliesEqual} implies that also
$X(V)=P(V)$ for all $V$. The last statement follows from the previous lemma.
\end{proof}

\begin{proof}[Proof of Corollary \ref{cor:Surjective}]
Let $X$ be the subset of $P$ consisting of all elements $f\in P(V)$ such that
\begin{align*}
\Hom(V,U) &\to P(U)\\
\phi &\mapsto P(\phi)f
\end{align*}
is not surjective. By Main Theorem I, it suffices to prove that $X\neq P$. As before, we claim that in fact $X(V)\neq P(V)$ already when $\dim V\geq \deg(P)\cdot\dim P(U)$.

First suppose that $P$ is irreducible. Then $P$ is a Schur functor. Take $V_0=K^d$ and $\ell=\dim P(U)$. Then it is known that $\Hom(V_0,U)\cdot P(V_0)$ spans $P(U)$. Let $P(\phi_1)p_1,\ldots,P(\phi_\ell)p_{\ell}$ be a basis of $P(U)$, let $\iota_i\colon V_0\to V_0^{\oplus \ell}$ and $\pi_i\colon V_0^{\ell}\to V_0$ be the inclusion and projection maps and take
\[
p=P(\iota_i)p_1+\ldots+P(\iota_\ell)p_\ell\in P(V_0^{\oplus \ell}).
\]
Then $P(\phi_i\circ\pi_i)(p)=P(\phi_i)p_i$. Hence
\begin{align*}
\Hom(V_0^{\oplus\ell},U) &\to P(U)\\
\phi &\mapsto P(\phi)p
\end{align*}
is surjective.

Next, suppose that $P=Q\oplus R$ and that there exist $f\in Q(V)$ and $g\in R(W)$ such that
\begin{align*}
\Hom(V,U) &\to Q(U)&\mbox{and}&&\Hom(W,U) &\to R(U)\\
\phi &\mapsto Q(\phi)f&&&\phi&\mapsto R(\phi)g
\end{align*}
are surjective. By induction, we can assume such $f,g$ exist when $\dim V \geq \deg(P)\cdot \dim Q(U)$ and $\dim W\geq\deg(P)\cdot \dim R(U)$. Now, we see that
\begin{align*}
\Hom(V\oplus W,U) &\to P(U)\\
\phi &\mapsto P(\phi)(P(\iota_1)(f)+P(\iota_2)(g))
\end{align*}
is surjective. This proves the first part of the corollary. For the second statement, we note that when $P$ is irreducible the elements of $\im(\alpha_i)$ have bounded strength. As the bound depends only on $X$ and $X$ only depends on $\dim U$, we see that $f \not \in\bigcup_{i=1}^k \im(\alpha_i)$ for all $f$ with strength greater than some function of $\dim U$ only.
\end{proof}

\section{Proof of Main Theorem II} \label{sec:ProofII}

\subsection{Construction of the minimal class}\label{ssec:minimal_class}
Let $P$ be a homogeneous polynomial functor of degree $d>0$ over an
algebraically closed field $K$ of characteristic zero. Decompose
\[
P=P^{(1)} \oplus \cdots \oplus P^{(\ell)}
\]
into Schur functors. For each $U \in \Vec$ of dimension $\geq d$ the
$\GL(U)$-module $P^{(i)}(U)$ is irreducible (and in
particular nonzero).  Let $V \in \Vec$ be a
vector space of dimension $d$. Let $V^{(1,i)}$ be a copy of~$V$ for each $i=1,\ldots,\ell$ and choose any nonzero $q^{(1,i)} \in
P^{(i)}(V^{(1,i)})$. We write
\[
q^{(1)}:=q^{(1,1)} + \ldots + q^{(1,\ell)} \in
P^{(1)}(V^{(1,1)}) \oplus \cdots \oplus P^{(\ell)}(V^{(1,\ell)})
\subseteq P(W^{(1)})
\]
where $W^{(1)}=V^{(1,1)} \oplus \cdots \oplus V^{(1,\ell)}$.
We take independent copies $W^{(j)}=V^{(j,1)} \oplus \cdots \oplus V^{(j,\ell)}$ of $W^{(1)}$ and copies $q^{(j)}=q^{(j,1)} + \ldots + q^{(j,\ell)} \in P(W^{(j)})$ of $q_1$ and set
\[
q:=q^{(1)} + q^{(2)} + \ldots \in P_\infty
\]
where we concatenate copies of a basis in the $\ell d$-dimensional space $W^{(1)}$
to identify $W^{(1)} \oplus \cdots \oplus W^{(k)}$ with~$K^{k \ell d}$.

\begin{ex}
Let $P=S^d \oplus \Wedge^d$, so that we may take $V=K^d$. We may
take $q^{(1,1)}:=x_1^d \in S^d(V^{(1,1)})$ and $q^{(1,2)}:=x_{d+1} \wedge
\cdots \wedge x_{2d} \in \Wedge^d(V^{(1,2)})$, where $x_1,\ldots,x_d$
and $x_{d+1},\ldots,x_{2d}$ are bases of $V^{(1,1)}$ and $V^{(1,2)}$,
respectively. We then have
\[
q=(x_1^d + x_{d+1}\wedge \cdots \wedge x_{2d})
+ (x_{2d+1}^d + x_{3d+1}\wedge \cdots \wedge x_{4d}) + \ldots
\qedhere
\]
\end{ex}

We will prove, first, that any $q$ constructed in this manner has a
dense $\GL_\infty$-orbit in $P_\infty$, and second, that $q \preceq p$
for all $p \in P_\infty$ with a dense $\GL_\infty$-orbit.

\subsection{Density of the orbit of $q$}

\begin{prop} \label{prop:Dense}
The $\GL_\infty$-orbit of $q$ is dense in $P_\infty$.
\end{prop}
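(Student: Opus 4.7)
The plan is to use the inverse-limit topology on $P_\infty$: density of $\GL_\infty\!\cdot q$ in $P_\infty$ is equivalent to density of its image under every projection $P_\infty\to P(K^n)$. Fix $n$ and set $N=k\ell d$ with $k$ to be chosen. I reduce the problem to showing that the polynomial map
\[
\Psi_N\colon \Hom(K^N,K^n)\to P(K^n),\qquad \psi\mapsto P(\psi)(q_N),
\]
is surjective for $k$ large enough (depending on $n$), where $q_N$ is the image of $q$ in $P(K^N)$, equal to $q^{(1)}+\cdots+q^{(k)}$. This reduction suffices because for every $g\in\GL_N\subseteq\GL_\infty$ the $n$-th coordinate of $g\cdot q$ equals $P(\pi^N_n\circ g)(q_N)$, and as $g$ ranges over $\GL_N$ the composition $\pi^N_n\circ g$ traces out exactly the full-row-rank elements of $\Hom(K^N,K^n)$ (any such map extends to an invertible $N\times N$ matrix). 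The rank-$n$ locus is Zariski-open and dense in $\Hom(K^N,K^n)$, so once $\Psi_N$ is shown surjective (hence dominant), its restriction to this open set still has Zariski-dense image in $P(K^n)$.

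For surjectivity of $\Psi_N$, I would decompose $\psi$ block-wise according to $K^N=\bigoplus_{j=1}^k\bigoplus_{i=1}^{\ell}V^{(j,i)}$. Setting $\phi_{j,i}:=\psi|_{V^{(j,i)}}$ and using $q^{(j,i)}\in P^{(i)}(V^{(j,i)})$, one obtains
\[
\Psi_N(\psi)=\sum_{i=1}^{\ell}\sum_{j=1}^{k}P^{(i)}(\phi_{j,i})(q^{(j,i)}).
\]
The summands for distinct $i$ lie in distinct direct summands $P^{(i)}(K^n)$ of $P(K^n)$, and each block $\phi_{j,i}$ can be chosen independently, so it suffices to show that every $p^{(i)}\in P^{(i)}(K^n)$ admits an expression $\sum_j P^{(i)}(\phi_{j,i})(q^{(j,i)})$ for suitable maps $\phi_{j,i}$, provided $k$ is large enough.

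The main obstacle is the following span property: for each fixed $j$, the linear span of $\{P^{(i)}(\phi)(q^{(j,i)}):\phi\in\Hom(V^{(j,i)},K^n)\}$ equals all of $P^{(i)}(K^n)$. Here the construction of $q$ is crucial: since $\dim V^{(j,i)}=d$, the module $P^{(i)}(V^{(j,i)})$ is an irreducible $\GL(V^{(j,i)})$-module, so the nonzero tensor $q^{(j,i)}$ is a cyclic generator. When $n\geq d$ the span property then follows from irreducibility of $P^{(i)}(K^n)$ as a $\GL(K^n)$-module, combined with the nonvanishing of $P^{(i)}(\iota)(q^{(j,i)})$ for any linear injection $\iota\colon V^{(j,i)}\hookrightarrow K^n$; when $n<d$ and $P^{(i)}(K^n)\neq 0$, one replaces $\iota$ by a split surjection and uses the $\GL(V^{(j,i)})$-action on $q^{(j,i)}$ to ensure it is not annihilated. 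Once the span property is in hand, each $p^{(i)}$ becomes a linear combination of at most $m_i:=\dim P^{(i)}(K^n)$ such elements; the scalar coefficients are absorbed into the linear maps by replacing $\phi$ with $c^{1/d}\phi$ (using algebraic closure of $K$ and degree-$d$ homogeneity of $P^{(i)}$). Taking $k\geq\max_i m_i$, identifying the various $V^{(j,i)}$ with $V^{(1,i)}$ through the chosen isomorphisms, and setting the remaining blocks $\phi_{j,i}=0$ then produces the desired $\psi$ with $\Psi_N(\psi)=p$.
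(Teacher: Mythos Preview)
Your argument is correct and follows essentially the same route as the paper's proof: both reduce to showing that for large $N$ the map $\psi\mapsto P(\psi)(q^{(1)}+\cdots+q^{(k)})$ from $\Hom(K^N,K^n)$ to $P(K^n)$ is surjective, and both establish this via the irreducibility of $P^{(i)}(K^n)$ as a $\GL_n$-module together with the nonvanishing of $P^{(i)}(\iota)(q^{(j,i)})$. Two minor differences: the paper simply assumes $\dim U\geq d$ (density in $P(K^n)$ for large $n$ already forces density for small $n$ via the surjective projections $P(K^N)\to P(K^n)$), which bypasses your separate treatment of the case $n<d$; and you are more explicit than the paper about why surjectivity of $\Psi_N$ on all of $\Hom(K^N,K^n)$ yields density of its restriction to the full-rank locus, and hence density of the projected $\GL_\infty$-orbit.
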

\begin{proof}
It suffices to prove that for each $U \in \Vec$ and each $p \in P(U)$
there exists a $k\geq 1$ and a linear map $\phi\colon W^{(1)} \oplus \cdots \oplus W^{(k)} \to U$ such that $P(\phi)(q^{(1)} + \ldots + q^{(k)})=p$.  Furthermore, we may
assume that $U$ has dimension at least $d$. Fix a linear
injection $\iota\colon V \to U$. Now $\tilde{q}^{(i)}:=P(\iota)(q^{(j,i)})$ is a nonzero vector in
the $\GL(U)$-module $P^{(i)}(U)$, which
is irreducible. Hence the component $p^{(i)}$ of $p$ in $P^{(i)}(U)$
can be written as
\[
p^{(i)}=P(g^{(1,i)})\tilde{q}^{(i)} + \ldots + P(g^{(k_i,i)})\tilde{q}^{(i)}
\]
for suitable elements $g^{(1,i)},\ldots,g^{(k_i,i)} \in \End(U)$. Do
this for all $i=1,\ldots,\ell$. By taking the maximum of the numbers
$k_i$ (and setting the irrelevant $g^{(j,i)}$ equal to zero) we may
assume that the $k_i$ are all equal to a fixed number
$k$; this is the $k$ that we needed. Now
we may define $\phi$
by declaring its restriction on $V^{(j,i)}$ to be equal to $g^{(j,i)} \circ
\iota$. We then have
\[
P(\phi)(q_1+\ldots+q_k)
= \sum_{j=1}^k \sum_{i=1}^\ell P(g^{(j,i)}) \tilde{q}^{(i)}
= \sum_{i=1}^\ell p^{(i)} = p,
\]
as desired.
\end{proof}

\subsection{Minimality of the class of $q$}

\begin{prop} \label{prop:Specialising}
We have $q \preceq p$ for every $p \in P_\infty$ with a dense
$\GL_\infty$-orbit.
\end{prop}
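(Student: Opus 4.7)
The plan is to construct $\phi\in E$ satisfying $P(\phi)p=q$ by iteratively building the rows of $\phi$ in batches of $m:=\ell d$, where the $j$-th batch is designed to produce the block $q^{(j)}\in P(W^{(j)})$ of $q$. After $j-1$ batches I will have a linear map $\psi_{(j-1)m}\colon K^{N_{j-1}}\to K^{(j-1)m}$ with $P(\psi_{(j-1)m})(p_{N_{j-1}})=q_{(j-1)m}$, and the row-finiteness constraint on elements of $E$ then reduces the extension step to finding $N_j\geq N_{j-1}$ and linear maps $A\colon K^{N_{j-1}}\to K^m$ and $B\colon K^{N_j-N_{j-1}}\to K^m$ such that
\[
\psi_{jm}=\begin{pmatrix}\psi_{(j-1)m}&0\\A&B\end{pmatrix}
\]
satisfies $P(\psi_{jm})(p_{N_j})=q_{jm}$.

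The key observation to drive this extension is that the pure-tail component $\tilde p\in P_{\infty-N_{j-1}}$ of $p$, with respect to the splitting $K^\infty=K^{N_{j-1}}\oplus K^{\infty-N_{j-1}}$, also has a dense $\GL_{\infty-N_{j-1}}$-orbit. This is the contrapositive of Corollary~\ref{cor:degen+finstrength}: decomposing $p$ as its pure head plus mixed plus pure tail, the head piece lies in the finite-dimensional space $P(K^{N_{j-1}})$ and so has finite strength, and each mixed bidegree-$(d-e,e)$ piece with $0<e<d$ is a finite sum of expressions $\alpha(g,h)$ with $g$ and $h$ of degrees $d-e,e<d$ respectively, hence again has finite strength; if $\tilde p$ had a non-dense orbit then Corollary~\ref{cor:degen+finstrength} would contradict the density of the orbit of~$p$. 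Applying Corollary~\ref{cor:surjective_infty} to $\tilde p$ then produces $N_j$ and $B$ for which $P(B)$ sends the appropriate truncation of $\tilde p$ to $q^{(j)}$.

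The hard part will be to arrange for the full output $P(\psi_{jm})(p_{N_j})\in P(K^{jm})$ to equal~$q_{jm}$: this demands that the bidegree-$(d-e,e)$ mixed components of the output (with respect to the codomain splitting $K^{jm}=K^{(j-1)m}\oplus K^m$) all vanish, since $q_{jm}$ has none. Taking $A=0$ makes $\psi_{jm}$ block-diagonal so the pure bidegree components come out correctly ($q_{(j-1)m}$ from the pure-head part of $p_{N_j}$ by the inductive hypothesis, and $q^{(j)}$ from the pure-tail part by the choice of $B$), but the mixed bidegree pieces of $p_{N_j}$ are then mapped nontrivially into the mixed components of the output via the induced bifunctor action. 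To cancel these I would allow $A$ to be nonzero, yielding a polynomial system in the entries of $(A,B)$ whose solvability I must establish; the intended flexibility comes from letting $N_j$ grow (so that $B$ contributes arbitrarily many unknowns against a fixed finite number of cancellation equations) combined with the genericity provided by the density of the orbit of~$\tilde p$ in $P_{\infty-N_{j-1}}$.

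Assembling the iterates yields $\phi\in E$, since the first $jm$ rows of $\phi$ are supported in columns $[1,N_j]$ and so each individual row has finite support; by construction $P(\phi)p=q$, which gives $q\preceq p$ as desired.
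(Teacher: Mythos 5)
Your setup and first two reduction steps match the paper exactly: build $\phi\in E$ block-by-block along the block-diagonal, observe via Corollary~\ref{cor:degen+finstrength} that the pure-tail component $p'$ of $p$ at each stage still has a dense orbit, and invoke Corollary~\ref{cor:surjective_infty} to hit $q^{(j)}$. You also correctly identify the crux: with a block-diagonal extension, the mixed bidegree-$(d-e,e)$ pieces $r_1,\ldots,r_{d-1}$ of $p$ (in the split at $m_{j-1}$) get pushed into the mixed components of the output, which must all be zero since $q_{jm}$ has none.

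However, the fix you propose --- allowing $A\neq 0$ and solving a polynomial system in the entries of $(A,B)$ --- is a genuine gap, and in fact heads in the wrong direction. First, the argument ``many unknowns against few equations plus genericity'' is not a proof: the system is for the \emph{specific} tensor $p$, and nothing in the proposal rules out inconsistency. Second, and more seriously, making $A$ nonzero destroys the inductive structure you rely on: once the first $N_{j-1}$ columns of $\psi_{jm}$ are $\binom{\psi_{(j-1)m}}{A}$ rather than $\binom{\psi_{(j-1)m}}{0}$, the pure-head bidegree-$(d,0)$ part of $p_{N_j}$ no longer maps simply under $P(\psi_{(j-1)m})$; its image spreads into the mixed and pure-tail components of $P(K^{jm})$, so the pure-head component of the output is no longer $q_{(j-1)m}$ either, and you have compounded the cancellation problem rather than solved it.

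The paper's resolution keeps $A=0$ and instead chooses $B=\psi_j$ more cleverly. The key observation is that one need not kill the tuple $(r_1,\ldots,r_{d-1})$ directly, but only a ``source'' of it: since that tuple need not have a dense orbit, there exist a pure polynomial functor $R$ of degree $<d$, an element $r\in R_{\infty-m_{j-1}}$ with a \emph{dense} $\GL_{\infty-m_{j-1}}$-orbit, and a polynomial transformation $\alpha\colon R\to R_1\oplus\cdots\oplus R_{d-1}$ with $\alpha(r)=(r_1,\ldots,r_{d-1})$ (this is \cite[Theorem 4.2.5]{B:thesis} again). Because $\deg R<d=\deg P$, the pair $(r,p')$ has a dense orbit in $(R\oplus P)_{\infty-m_{j-1}}$ by \cite[Lemma 4.5.3]{B:thesis}. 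Now apply Corollary~\ref{cor:surjective_infty} to $(R\oplus P)_{\infty-m_{j-1}}$ and the target element $(0,q^{(j)})$: this produces $m_j$ and $\psi_j$ with $R(\psi_j)r_{[m_j]-[m_{j-1}]}=0$ and $P(\psi_j)p'_{[m_j]-[m_{j-1}]}=q^{(j)}$ \emph{simultaneously}. Since $\alpha$, being a polynomial transformation between pure polynomial functors, maps $0$ to $0$, each $R_j(\psi_j)r_{j,[m_j]-[m_{j-1}]}=\alpha_j(R(\psi_j)r_{[m_j]-[m_{j-1}]})=\alpha_j(0)=0$, i.e.\ the mixed parts all die under the \emph{same} block-diagonal map. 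This is the step your proposal is missing, and without it the argument does not close.
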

\begin{proof}
Let $p \in P_\infty$ be a tensor with a dense
$\GL_\infty$-orbit and write $p=(p_0,p_1,p_2,\ldots)$ with $p_i \in P(K^{i})$. Take $m_0=n_0=0$. There exists a linear map $\phi_0\colon K^{m_0}\to K^{n_0}$ such that $P(\phi_0)p_{m_0}=q_{n_0}=0$, namely the zero map. Write $n_i=n_0+i\ell d$. Our goal is to construct, for each integer $i\geq 1$, an integer $m_i\geq m_{i-1}$ and a linear map $\psi_i\colon K^{[m_i]-[m_{i-1}]}\to W^{(i)}$ such that the linear map $\phi_i\colon K^{m_i}\to K^{n_i}$ making the diagram
\[ \xymatrix{
K^{m_i}=K^{m_{i-1}}\oplus K^{[m_i]-[m_{i-1}]}\ar[rr]^{\phi_i} \ar[dr]_{\id_{m_{i-1}}\oplus\,\psi_i\phantom{hello}} && K^{n_{i-1}}\oplus W^{(i)}=K^{n_i}\\
& K^{m_{i-1}}\oplus W^{(i)}\ar[ru]_{\phantom{world}\phi_{i-1}\oplus\,\id_{W^{(i)}}}
} \]
commute satisfies $P(\phi_i)p_{m_i}=q_{n_i}=q^{(1)}+\ldots+q^{(i)}$.

Let $i\geq 1$ be an integer. As observed in \S\ref{ssec:Derivative}, we can write
\[
P(K^{m_{i-1}} \oplus V) = P(K^{m_{i-1}})\oplus R_1(V)\oplus\cdots\oplus R_{d-1}(V)\oplus P(V)
\]
where $R_j=\Sh_{K^{m_{i-1}}}(P)_j$ is a homogeneous polynomial functor of degree $j$.
Writing $K^\NN$ as $K^{m_{i-1}} \oplus K^{\NN-[m_{i-1}]}$, we obtain a
corresponding decomposition
\[
p=p_{m_{i-1}} + r_1 + \ldots + r_{d-1} + p'
\]
where $r_j\in R_{j,\infty-m_{i-1}}$ and $p'\in P_{\infty-m_{i-1}}$ and we claim that $p'$ has a dense $\GL_{\infty-m_{i-1}}$-orbit; here we use
the notation from Remark~\ref{re:Shift}.

The polynomial bifunctor $(U,V)\mapsto P(U\oplus V)$ is a direct sum of bifunctors of the form $(U,V)\mapsto Q(U)\otimes R(V)$ where $Q,R$ are Schur functors. It follows that $R_j(V)$ is the direct sum of spaces $Q(K^{m_{i-1}})\otimes R(V)$ where $Q,R$ are Schur functors of degrees $d-j,j$, respectively. Hence the elements $r_1,\ldots,r_{d-1}$ have finite strength. Also note that $p_{m_{i-1}}\in P(K^{m_{i-1}})$ has finite strength.
So by Corollary~\ref{cor:degen+finstrength}, we see that the $\GL_{\infty-m_{i-1}}$-orbit of $p'$ must be dense.

The tuple $(r_1,\ldots,r_{d-1}) \in \bigoplus_{j=1}^{d-1}R_{j,\infty-m_{i-1}}$ may not have a dense $\GL_{\infty-m_{i-1}}$-orbit. However,
there exists a polynomial functor $R$ less than or equal to $R_1 \oplus
\cdots \oplus R_{d-1}$ with $R(\{0\})=\{0\}$, an $r \in R_{\infty-m_{i-1}}$ and
a polynomial transformation
\[
\alpha=(\alpha_1,\ldots,\alpha_{d-1})\colon R \to R_1 \oplus \cdots \oplus R_{d-1}
\]
such that $r$ has a dense $\GL_{\infty-m_{i-1}}$-orbit and
$\alpha(r)=(r_1,\ldots,r_{d-1})$. Since $P$ is homogeneous of degree $d>\deg(R)$, the pair $(r,p')$ has a dense orbit in $R_{\infty-m_{i-1}} \oplus P_{\infty-m_{i-1}}$ by \cite[Lemma 4.5.3]{B:thesis}. Hence, by Corollary~\ref{cor:surjective_infty}, there exists
an $m_i \geq m_{i-1} + \ell d$ and a linear map $\psi_i\colon
K^{[m_i]-[m_{i-1}]} \to W^{(i)}$ such that
$R(\psi_i)r_{[m_i]-[m_{i-1}]} = 0$ and $P(\psi_i)p'_{[m_i]-[m_{i-1}]}=q^{(i)}$.

Since polynomial transformations between polynomial functors
with zero constant term map zero to zero, the first equality
implies that, for all $j=1,\ldots,d-1$,
\[
R_j(\psi_i)r_{j,[m_i]-[m_{i-1}]}=R_j(\psi_i)\alpha_j(r_{[m_i]-[m_{i-1}]})
= \alpha_j(R(\psi_i)r_{[m_i]-[m_{i-1}]}) = \alpha_j(0)=0.
\]
Thus, informally, applying the map $\psi_i$ makes $p'$
specialise to the required $q^{(i)}$, while the terms
$r_1,\ldots,r_{d-1}$ are specialised to zero.

We define $\phi_i$ as above and we have
\begin{align*}
P(\phi_i)p_{m_i}&=P(\phi_{i-1}\oplus\id_{W^{(i)}}) P(\id_{m_{i-1}}\oplus\,\psi_i)\left(p_{m_{i-1}} + \sum_{j=1}^{d-1}r_{j,[m_i]-[m_{i-1}]} + p'_{[m_i]-[m_{i-1}]}\right)\\
&=P(\phi_{i-1}\oplus\id_{W^{(i)}}) \left(p_{m_{i-1}} + \sum_{j=1}^{d-1}R_j(\psi_i)r_{j,[m_i]-[m_{i-1}]}+ P(\phi_i)p'_{[m_i]-[m_{i-1}]}\right)\\
&=P(\phi_{i-1}\oplus\id_{W^{(i)}})(p_{m_{i-1}}+q^{(i)})=q_{n_{i-1}}+q^{(i)}=q^{(1)}+\ldots+q^{(i)}.
\end{align*}
Iterating this argument, we find that the infinite matrix
\[
\begin{pmatrix}\phi_0\\&\psi_1\\&&\psi_2\\&&&\psi_3\\&&&&\ddots\end{pmatrix}=: e
\]
has the property that $P(e)p=q^{(1)}+q^{(2)}+\ldots=q$, as desired.
\end{proof}

\begin{re}
Note that the element $e \in E$ constructed above has only finitely many
nonzero entries in each row {\em and} in each column!
\end{re}

\begin{re}\label{re:addingS1s}
Fix an integer $k\geq0$. Then we have the following strengthening of the previous theorem: we have $(x_1,\ldots,x_k,q)\preceq (\ell_1,\ldots,\ell_k,p)$ for every $(\ell_1,\ldots,\ell_k,p)\in (S^1_{\infty})^{\oplus k}\oplus P_{\infty}$ with a dense $\GL_{\infty}$-orbit. Here $q$ is defined as before in variables distinct from $x_1,\ldots,x_k$. To see this, note that a tensor in $(S^1_{\infty})^{\oplus k}\oplus P_{\infty}$ with a dense $\GL_{\infty}$-orbit is of the form $(\ell_1,\ldots,\ell_k,p)$ where $\ell_1,\ldots,\ell_k\in S^1_{\infty}$ are linearly independent and $p\in P_{\infty}$ has a dense $\GL_{\infty}$-orbit. By acting with an invertible element of $E$ as in Example~\ref{ex:copiesS}, we may assume that $\ell_i=x_i$. Take $n_0=k$. Similar to induction step in the proof of the previous theorem, there exists an integer $m_0\geq k$ and a linear map $\psi\colon K^{[m_0]-[k]}\to K^{n_0}$ such that the linear map $\phi_0=\id_k+\,\psi\colon K^k\oplus K^{[m_0]-[k]}\to K^{n_0}$ satisfies $P(\phi_0)p_{m_0}=q_{n_0}=0$. We now proceed as in the proof of the theorem with these $m_0,n_0,\phi_0$ to find the result.
\end{re}

\begin{proof}[Proof of Theorem~\ref{thm:MainII}, existence
of $p$]
The existence of a minimal $p$ among all elements with a dense
$\GL_{\infty}$-orbit follows directly from Propositions~\ref{prop:Dense}
and~\ref{prop:Specialising}.
\end{proof}

\subsection{Maximal tensors} \label{ssec:maximal_class}

Next, we construct maximal elements with respect to $\preceq$ of $P_{\infty}$ for any pure polynomial functor $P$. We start with $n$-way tensors, then do Schur functors and finally general polynomial functors. Let $d\geq 1$ be an integer and let $T^d$ be the polynomial functor sending $V\mapsto V^{\otimes d}$.

\begin{lm}
There exists a tensor $r_d\in T^d_{\infty}$ such that $p\preceq r_d$ for all $p\in T^d_{\infty}$.
\end{lm}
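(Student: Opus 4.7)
Plan:

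The plan is to exhibit an explicit $r_d\in T^d_\infty$ and, for each $p\in T^d_\infty$, to construct $\phi\in E$ with $T^d(\phi)r_d=p$. A natural first candidate is
\[
r_d=\sum_{n=1}^\infty y_{n,1}\otimes\cdots\otimes y_{n,d},
\]
where $\{y_{n,j}\}_{n\in\NN,\,j\in[d]}$ is a bijective relabelling of the standard basis of $T^d_\infty$. Since $T^d(\phi)r_d=\sum_n\phi(y_{n,1})\otimes\cdots\otimes\phi(y_{n,d})$, the task reduces to writing $p=\sum_n\ell_{n,1}\otimes\cdots\otimes\ell_{n,d}$ for linear forms $\ell_{n,j}$ such that each coordinate $x_i$ appears in only finitely many of them---which is precisely the $E$-condition on the substitution $\phi\colon y_{n,j}\mapsto\ell_{n,j}$.

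For $d=2$ I would use an LU-decomposition-style Ansatz: set $\ell_{n,j}=a_n^{(j)}x_n+\sum_{i>n}\beta_i^{(n,j)}x_i$ with scalars $a_n^{(j)},\beta_i^{(n,j)}\in K$ to be determined. Then $\operatorname{supp}(\ell_{n,j})\subseteq\{n,n+1,\ldots\}$, so each $x_i$ appears in $\ell_{n,j}$ only for $n\leq i$, giving at most $id$ occurrences, and making the $E$-condition automatic. The resulting triangular system $c_{ij}=\sum_{n\leq\min(i,j)}\ell_{n,1,i}\ell_{n,2,j}$ is solved inductively on $\max(i,j)$: at each level $n$ the diagonal equation fixes the product $a_n^{(1)}a_n^{(2)}$ (distributable between the two factors by algebraic closedness of $K$, with partial pivoting---i.e., allowing one factor to vanish while adjusting later free parameters---when it is zero), and the off-diagonal equations $c_{n,j}$ and $c_{j,n}$ for $j>n$ then determine $\beta_j^{(n,2)}$ and $\beta_j^{(n,1)}$ respectively.

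For $d\geq 3$ the naive Ansatz above is over-constrained: at a multi-index with several coordinates strictly above the pivot level $n$, the corresponding equation becomes a consistency relation among $\beta$'s already pinned down by the lower-multiplicity equations, and so generically fails. The main obstacle is therefore to enlarge either the Ansatz or the tensor $r_d$---for instance by including countably many rank-one summands per pivot level, producing an abundant supply of $(a,\beta)$-parameters---or alternatively to prove the lemma by induction on $d$: slice $p=\sum_n x_n\otimes c_n$ with $c_n\in T^{d-1}_\infty$, realise each $c_n$ as $T^{d-1}(\psi_n)r_{d-1}$ by the inductive hypothesis, and then verify by a coordinated choice of the $\psi_n$'s that the combined substitution still has only finitely many nonzero entries in each row. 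In either approach the technical heart is ensuring that the infinitely many consistency conditions arising from arbitrary $(c_\alpha)$ can be met, and the algebraic closedness of $K$ together with the abundant disjoint variables in $r_d$ provides the required flexibility.
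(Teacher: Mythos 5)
Your analysis of the naive Ansatz is accurate: the single rank-one-per-level candidate does work for $d=2$ via an LU-style inductive solve, and it genuinely over-constrains for $d\geq 3$ (the mixed off-pivot equations $c_{n,j,k}$ with $j,k>n$ are consistency relations, not new unknowns). Your two suggested repairs are also pointing in the right direction, and the second one (induction on $d$) is the route the paper takes. But as written there is a gap in that inductive step, and the gap is precisely where the $E$-condition lives.

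The problem with slicing $p=\sum_n x_n\otimes c_n$ with $c_n\in T^{d-1}_\infty$ is that $c_n$ may involve \emph{all} variables $x_1,x_2,\ldots$, not just those with index $\geq n$. Specialising a fresh copy of $r_{d-1}$ to each $c_n$ then forces a fixed variable $x_\ell$ to appear in the substitutions for infinitely many $n$, breaking the finitely-many-per-row condition. Concretely, for $d=2$ take $p=(\sum_{n}x_n)\otimes x_1$: your slicing gives $c_n=x_1$ for every $n$, and realising $c_n=T^1(\psi_n)r_1$ forces the fresh variable of the $n$-th copy of $r_1$ to map to $x_1$ for all $n$, so the $x_1$-row of the combined matrix is infinite. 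There is no ``coordinated choice of the $\psi_n$'s'' that rescues this particular slicing; one has to slice differently. (The tensor $p$ itself \emph{is} reachable from a universal $r_2$ --- e.g.\ $\ell_{1,1}=\sum_i x_i$, $\ell_{1,2}=x_1$, all other slices zero --- but that decomposition does not come from grouping by the first coordinate.)

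The missing idea is to slice by the \emph{minimum} coordinate over all $d$ positions, not just the first one. Write each basis tensor $x_{\alpha_1}\otimes\cdots\otimes x_{\alpha_d}$ into the slice $(i,j)$ where $i=\min_k\alpha_k$ and $j$ is the first position attaining that minimum; this gives
\[
p=\sum_{i\geq 1}\sum_{j=1}^{d} x_i\otimes_j p_{i,j}(x_i,x_{i+1},\ldots)
\quad\text{with}\quad p_{i,j}\in T^{d-1}_{\infty},
\]
where crucially $p_{i,j}$ involves only the variables $x_i,x_{i+1},\ldots$. Correspondingly $r_d$ must have one fresh copy of $r_{d-1}$ \emph{per pair} $(i,j)$, not per level $i$ (this is the ``abundance'' you were gesturing at in your option 1, made precise and compatible with the induction): $r_d=\sum_{i,j}x_{\iota(i,j,1)}\otimes_j r_{d-1}(x_{\iota(i,j,2)},x_{\iota(i,j,3)},\ldots)$ for an injection $\iota\colon\NN\times[d]\times\NN\to\NN$. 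Now the $E$-condition is automatic: a given $x_\ell$ can occur in the linear forms substituted into slice $(i,j)$ only when $i\leq\ell$ (since $p_{i,j}$ uses only variables $\geq x_i$), so it occurs across at most $\ell d$ slices, and within each slice only finitely often by the inductive hypothesis. Once you add this refined slicing and the matching enlargement of $r_d$, your plan closes.
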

\begin{proof}
For $d=1$, we know that the element $r_1:=x_1\in T^1_{\infty}$ satisfies $p\preceq r_1$ for all $p\in T^1_{\infty}$. Now suppose that $d\geq 2$ and that $r_{d-1}=r_{d-1}(x_1,x_2,\ldots)\in T^{d-1}_{\infty}$ satisfies $p\preceq r_{d-1}$ for all $p\in T^{d-1}_{\infty}$. We define a $r_d\in T^d_{\infty}$ satisfying $p\preceq r_d$ for all $p\in T^d_{\infty}$.

For $j\in\{1,\ldots,d\}$, we define the map $-\otimes_j-\colon T^1_{\infty}\times T^{d-1}_{\infty}\to T^d_{\infty}$ as the inverse limit of the bilinear maps $-\otimes_j-\colon V\times V^{\otimes d-1}\to V^{\otimes d}$ such that $v_j\otimes_j(v_1\otimes \cdots \otimes v_{j-1}\otimes v_{j+1}\otimes\cdots\otimes v_d)=v_1\otimes\cdots\otimes v_d$ for all finite-dimensional vector space $V$ and all vectors $v_1,\ldots,v_d\in V$. Now, we take
\[
r_d:=\sum_{i=1}^{\infty}\sum_{j=1}^dx_{\iota(i,j,1)}\otimes_j r_{d-1}(x_{\iota(i,j,2)},x_{\iota(i,j,3)},\ldots)
\]
where $\iota\colon\NN\times \{1,\ldots,d\}\times\NN\to\NN$ is any injective map. We claim that $p\preceq r_d$ for all $p\in T^d_{\infty}$. Indeed, any such $p$ can we written as
\[
p=\sum_{i=1}^{\infty}\sum_{j=1}^dx_i\otimes_j p_i(x_i,x_{i+1},\ldots)
\]
with $p_1,p_2,\ldots\in T^{d-1}_{\infty}$ and by assumption
we can specialise $r_{d-1}$ to $p_i$ using an element of $E$
for all~$i$. Combined, this yields a specialisation of $r_d$
to $p$. Note here that $x_{\iota(i,j,1)}\mapsto x_i$ and
$x_{\iota(i,j,k)}\mapsto\ell_{i,j,k}$ for $k>1$ in such a
way that $x_{\ell}$ occurs, when ranging over $k$, in only
finitely many $\ell_{i,j,k}$ when $i\leq \ell$ and $x_{\ell}$ does not occur in $\ell_{i,j,k}$ when $i>\ell$. This means that the specialisation of $r_d$ to $p$ indeed goes via an element of $E$. So for all $d\geq 1$, the space $T^d_{\infty}$ has a maximal element with respect to $\preceq$.
\end{proof}

\begin{lm}
Let $P$ be a Schur functor of degree $d\geq 1$. Then there exists a tensor $r\in P_{\infty}$ such that $p\preceq r$ for all $p\in P_{\infty}$.
\end{lm}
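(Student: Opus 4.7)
The plan is to bootstrap from the previous lemma by embedding $P$ into $T^d$. Since $\cha K=0$ and $P$ is a Schur functor of degree $d$, the Young symmetrizer construction realises $P$ as a direct summand of $T^d$: there exist natural transformations $\iota\colon P\to T^d$ and $\pi\colon T^d\to P$ with $\pi\circ\iota=\id_P$. Both $\iota$ and $\pi$ induce continuous maps $\iota_\infty\colon P_\infty\to T^d_\infty$ and $\pi_\infty\colon T^d_\infty\to P_\infty$, and these still satisfy $\pi_\infty\circ\iota_\infty=\id_{P_\infty}$, because they are defined component-wise on the inverse limit.

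Given this, let $r_d\in T^d_\infty$ be the maximal element produced by the previous lemma and set $r:=\pi_\infty(r_d)\in P_\infty$. For an arbitrary $p\in P_\infty$, define the lift $\tilde p:=\iota_\infty(p)\in T^d_\infty$. By the previous lemma, there is an element $\phi\in E$ with $T^d(\phi)r_d=\tilde p$. Applying $\pi_\infty$ and using naturality of $\pi$ (which, since the $E$-action on the inverse limit is built level-wise from natural polynomial transformations, gives $\pi_\infty\circ T^d(\phi)=P(\phi)\circ\pi_\infty$), we obtain
\[
P(\phi)r=P(\phi)\pi_\infty(r_d)=\pi_\infty\bigl(T^d(\phi)r_d\bigr)=\pi_\infty(\tilde p)=\pi_\infty\bigl(\iota_\infty(p)\bigr)=p.
\]
Thus $p\preceq r$, and since $p$ was arbitrary, $r$ is the desired maximal element.

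The only point that requires care is the interaction between the $E$-action on $P_\infty$ and a natural transformation $\pi\colon T^d\to P$. I expect this to be routine rather than a real obstacle: the $E$-action is defined component-wise via the polynomial transformations $P(\psi_i)$ attached to upper-left blocks $\psi_i$ of $\phi$, and at each finite level naturality of $\pi$ gives $\pi_V\circ T^d(\psi_i)=P(\psi_i)\circ\pi_V$, which assembled over all components yields $\pi_\infty\circ T^d(\phi)=P(\phi)\circ\pi_\infty$. Note also that the case $d=1$ is trivial since $P=T^1$ there. The argument generalises transparently: whenever $Q$ is a polynomial functor admitting a surjective natural transformation from a polynomial functor $R$ whose $R_\infty$ has a maximal element and this surjection splits, then $Q_\infty$ has a maximal element as well.
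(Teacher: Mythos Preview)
Your proof is correct and takes essentially the same approach as the paper: the paper also sets $r$ to be the $P_\infty$-component of $r_d$ and uses that $P_\infty$ is a direct summand of $T^d_\infty$. You have simply spelled out in detail the naturality argument (that $\pi_\infty$ intertwines the $E$-actions) which the paper leaves implicit.
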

\begin{proof}
The space $P_{\infty}$ is a direct summand of $T^d_{\infty}$. Let $r$ be the component in $P_{\infty}$ of $r_d$ from the previous lemma. Then $p\preceq r$ for all $p\in P_{\infty}$.
\end{proof}

\begin{prop}\label{prop:max_el}
Let $P$ be a pure polynomial functor. Then there exists a tensor $r\in P_{\infty}$ such that $p\preceq r$ for all $p\in P_{\infty}$.
\end{prop}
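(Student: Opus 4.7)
The plan is to decompose $P$ into Schur functors and patch together the maximal elements from the previous lemma, each placed on a disjoint infinite set of coordinates so that a single element of $E$ can simultaneously specialise all of them.

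Since $\cha K=0$, I will decompose $P = P^{(1)} \oplus \cdots \oplus P^{(k)}$ as a direct sum of Schur functors, all of positive degree because $P$ is pure, and use the previous lemma to obtain for each $i$ a maximal tensor $r^{(i)}\in P^{(i)}_\infty$. Next, I fix injections $\sigma_1,\ldots,\sigma_k\colon\NN\hookrightarrow\NN$ with pairwise disjoint images $I_i:=\sigma_i(\NN)$, and let $S_i\in E$ be the $\NN\times\NN$ matrix whose $(m,j)$-entry is $1$ if $m=\sigma_i(j)$ and $0$ otherwise (each row of $S_i$ has at most one nonzero entry). Set $\tilde r^{(i)}:=P^{(i)}(S_i)r^{(i)}\in P^{(i)}_\infty$, which is ``supported'' on the coordinates in $I_i$, and define
\[
r := \tilde r^{(1)} + \cdots + \tilde r^{(k)} \in P_\infty.
\]

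To verify $p\preceq r$ for an arbitrary $p = p^{(1)} + \cdots + p^{(k)} \in P_\infty$, I would use the maximality of each $r^{(i)}$ to pick $\psi_i\in E$ with $P^{(i)}(\psi_i)r^{(i)}=p^{(i)}$, and then patch these into a single $\phi\in K^{\NN\times\NN}$ by declaring $\phi_{l,\sigma_i(j)}:=(\psi_i)_{l,j}$ for all $i,j,l$ and $\phi_{l,m}:=0$ whenever $m\notin\bigcup_i I_i$. Disjointness of the $I_i$ makes this unambiguous; row-finiteness of each $\psi_i$ together with finiteness of $k$ shows $\phi\in E$; and a direct computation gives $\phi\circ S_i=\psi_i$ (the product makes sense since $S_i$ has at most one nonzero entry per column). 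By the functoriality of the $E$-action on $P^{(i)}_\infty$ this yields
\[
P^{(i)}(\phi)\tilde r^{(i)} = P^{(i)}(\phi\circ S_i)r^{(i)} = P^{(i)}(\psi_i)r^{(i)} = p^{(i)},
\]
and hence $P(\phi)r=p$, proving $p\preceq r$.

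The heavy lifting has been done by the preceding lemma; the only genuinely new ingredient is that $E$ is rich enough to contain matrices whose nonzero columns are prescribed infinite disjoint subsets of $\NN$, which lets us run the $k$ Schur-functor specialisations in parallel. The one place where care is needed is verifying that the patched matrix $\phi$ remains row-finite, which uses precisely the fact that $P$ has only finitely many Schur summands.
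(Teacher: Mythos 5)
Your proposal is correct and takes essentially the same approach as the paper: decompose $P$ into Schur functors, apply the previous lemma to each summand, place the resulting maximal tensors on pairwise disjoint infinite sets of coordinates, and observe that the specialisations for the separate summands can be combined into a single row-finite matrix. The paper's proof is simply terser, using the explicit injections $j\mapsto i+(j-1)k$ and leaving the verification that the patched matrix lies in $E$ and gives the desired specialisation to the reader, which your argument spells out correctly.
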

\begin{proof}
Write
\[
P=P^{(1)}\oplus\cdots\oplus P^{(k)}
\]
as a direct sum of Schur functors. For each $i\in\{1,\ldots,k\}$, let $r_i=r_i(x_1,x_2,\ldots)\in P^{(i)}_{\infty}$ be a tensor such that $p_i\preceq r_i$ for all $p_i\in P^{(i)}_{\infty}$ and take $r=(r_1(x_1,x_{k+1},\ldots),\ldots,r_k(x_k,x_{2k},\ldots))\in P_{\infty}$. Then $p\preceq r$ for all $p\in P_{\infty}$.
\end{proof}

\begin{proof}[Proof of Theorem~\ref{thm:MainII}, the existence of $r$]
This follows directly from Proposition~\ref{prop:max_el}.
\end{proof}

\section{Further examples}\label{sec:examples}
In this section we give more examples: we prove that tensors in $P_{\infty}$ with a dense $\GL_{\infty}$-orbit for a single equivalence class when $P$ has degree $\leq 2$, we compare candidates for minimal tensors in a direct sum of $S^d$'s of distinct degrees and we construct maximal elements in $P_{\infty}$ for all $P$ with $P(\{0\})=\{0\}$.

\subsection{Polynomial functors of degree $\leq 2$}

\begin{ex}
Take $P = S^1 \oplus S^1$. Then a pair $(v,w)\in S^1_{\infty}\oplus S^1_{\infty}$ has one of the following forms:
\begin{enumerate}
\item the pair $(v,w)$ with $v,w\in S^1_{\infty}$ linearly independent vectors;
\item the pair $(\lambda u,\mu u)$ with $u\in S^1_{\infty}$ nonzero and $[\lambda:\mu]\in\PP^1$; or
\item the pair $(0,0)$.
\end{enumerate}
In the first case, the pair $(v,w)$ has a dense $\GL_{\infty}$-orbit and is equivalent to $(x_1,x_2)$. When $\mu v-\lambda w=0$ for some $\lambda,\mu\in K$, then this also holds for all specialisations of $(v,w)$. So the poset of equivalence classes is given by:
\begin{center}
\begin{tikzpicture}
	\node (1) at (0,2) {$(x_1,x_2)$};	
	\node (2) at (0,1) {$\PP^1$};
	\node (3) at (0,0) {$(0,0)$};

    \draw [thick] (1) -- (2);
    \draw [thick] (2) -- (3);
\end{tikzpicture}
\end{center}
where a point $[\lambda:\mu]\in \PP^1$ corresponds to the class of $(\lambda u, \mu u)$ with $u \in S^1_{\infty}$ nonzero and all points in $\PP^1$ are incomparable.
\end{ex}

\begin{ex}\label{ex:Quadrics}
Take $P=S^2$. By Proposition~\ref{prop:Specialising} each infinite quadric
\[
p=\sum_{1 \leq i \leq j} a_{ij} x_i x_j
\]
of infinite rank specialises to the quadric $q=x_1x_2 + x_3x_4 + \ldots$
via a suitable linear change of coordinates. Here each variable is only allowed to occur in only finitely many of the linear forms that $x_1,x_2,\ldots$ are substituted by.
Conversely, it is not difficult to see that $q$ specialises
to $p$ as well by applying the following element of $E$:
\[
\begin{pmatrix}
1 & a_{11} & 0 & 0      & 0 & 0 & \cdots\\
0 & a_{12} & 1 & a_{22} & 0 & 0 & \cdots\\
0 & a_{13} & 0 & a_{23} & 1 & a_{33} &\cdots\\
0 & a_{14} & 0 & a_{24} & 0 & a_{34} &\cdots\\
\vdots & \vdots & \vdots & \vdots & \vdots & \vdots &
\end{pmatrix}.
\]
We conclude that the infinite-rank quadrics form a single
equivalence class under $\simeq$
and that the rank function is an isomorphism from the poset of equivalence classes to the well-ordered
set $\{0,1,2,\ldots,\infty\}$.
\end{ex}

\begin{ex}\label{ex:AlternatingTensors}
Take $P=\Wedge^2$. By Proposition~\ref{prop:Specialising} each infinite alternating tensor
\[
p=\sum_{1 \leq i < j} a_{ij} x_i\land x_j
\]
of infinite rank specialises to $q=x_1\land x_2 + x_3\land x_4 + \ldots$. And, $q$ specialises to $p$ as well by applying the following element of $E$:
\[
\begin{pmatrix}
1 & 0 & 0 & 0      & 0 & 0 & \cdots\\
0 & a_{12} & 1 & 0 & 0 & 0 & \cdots\\
0 & a_{13} & 0 & a_{23} & 1 & 0 &\cdots\\
0 & a_{14} & 0 & a_{24} & 0 & a_{34} &\cdots\\
\vdots & \vdots & \vdots & \vdots & \vdots & \vdots &
\end{pmatrix}.
\]
As before, we conclude that the infinite-rank alternating
tensors form a single $\simeq$-equivalence class and that the rank function is an isomorphism from the poset of equivalence classes to the well-ordered set $\{0,1,2,\ldots,\infty\}$.
\end{ex}

\begin{ex}\label{ex:degreeleq2}
Take $P=(S^1)^{\oplus a}\oplus (S^2)^{\oplus b}\oplus (\Wedge^2)^{\oplus c}$ for integers $a,b,c\geq0$. By Remark~\ref{re:addingS1s}, any tuple in $P_{\infty}$ with a dense $\GL_{\infty}$-orbit specialises to the tuple
\begin{align*}
(x_1,\ldots,x_a,y_1y_2+y_{2b+1}y_{2b+2}+\ldots,&\ldots,y_{2b-1}y_{2b}+y_{4b-1}y_{4b}+\ldots,\\
&z_1\land z_2+z_{2c+1}\land z_{2c+2}+\ldots,\ldots,z_{2c-1}\land z_{2c}+z_{4c-1}\land z_{4c}+\ldots)
\end{align*}
where $y_{2ib+j}=x_{a+2ib+2ic+j}$ for $i\geq0$ and $1\leq
j<2b$ and $z_{2ic+j}=x_{a+2(i+1)b+2ic+j}$ for $i\geq0$ and
$1\leq j<2c$. By the previous examples, each of the entries
in this latter tuple independently specialises to any tensor
in the same space. So the entire tuple also specialises to
any other tuple in $P_{\infty}$. So the tuple with a dense
$\GL_{\infty}$-orbit again form a single
$\simeq$-equivalence class.
\end{ex}

\subsection{Non-homogeneous polynomial functors}
The proof of Proposition~\ref{prop:Specialising} relies on the fact that $P$ is homogeneous. Apart from the slight generalisation from Remark~\ref{re:addingS1s}, we don't know if such a result holds in a more general setting.

\begin{que}
Take $P=S^2 \oplus S^3$. Does there exist a tensor $q\in P_{\infty}$ with a dense $\GL_{\infty}$-orbit such that $q\preceq p$ for all $p\in P_{\infty}$ with a dense $\GL_{\infty}$-orbit?
\end{que}

The next example compares different candidates for such a minimal element.

\begin{ex}
Take $P = S^{d_1} \oplus S^{d_2} \oplus \cdots \oplus S^{d_k}$ with $1<d_1<d_2<\cdots<d_k$.
By \cite[Lemma 4.5.3]{B:thesis}, an element $(f_1, \cdots, f_k)\in P_{\infty}$ has dense $\GL_\infty$-orbit if and only if $f_i \in S^{d_i}_{\infty}$ has dense $\GL_\infty$-orbit for all $i = 1, \ldots, k$. In particular, the elements
\[
q =(q^{(1)},\ldots,q^{(k)})= (x_1^{d_1} + x_2^{d_1} + \ldots, \ldots, x_1^{d_k} + x_2^{d_k} + \ldots)
\]
and
\[
p=(p^{(1)},\ldots,p^{(k)})= (x_1^{d_1} + x_{k+1}^{d_1} + \ldots, \cdots, x_k^{d_k} + x_{2k}^{d_k} + \ldots)
\]
have dense $\GL_\infty$-orbits. Clearly $q\preceq p$. By Corollary~\ref{cor:surjective_infty}, there exists an $n\geq 1$ and linear forms $\ell_1,\ldots,\ell_n$ in $x_1,\ldots,x_k$ such that $q_n^{(j)}(\ell_1,\ldots,\ell_n)= x_j^{d_j}$ for $j=1,\ldots,k$. Take
\[
\ell_{hn+i}=\ell_i(x_{hn+1},\ldots,x_{hn+n})
\]
for $h\geq 1$ and $i\in\{1,\ldots,k\}$. Then we see that $q_n^{(j)}(\ell_{hn+1},\ldots,\ell_{hn+n})= x_{hn+j}^{d_j}$ for $j=1,\ldots,k$. So since
\[
q^{(j)}=q_n^{(j)}+q_n^{(j)}(x_{n+1},\ldots,x_{2n})+\ldots
\]
we see that $q^{(j)}(\ell_1,\ell_2,\ldots)=p^{(j)}$. Let $A$ be the $k\times n$ matrix corresponding to $\ell_1,\ldots,\ell_n$ and take
\[
e:=\begin{pmatrix}A\\&A\\&&\ddots\end{pmatrix}\in E
\]
Then $P(e)q^{(j)}=q^{(j)}(\ell_1,\ell_2,\ldots)$. So $p\preceq q$. Hence $p\simeq q$.
\end{ex}

\end{document}